\documentclass[11pt]{amsart}

\usepackage[T1]{fontenc}
\usepackage{lmodern}
\usepackage{amsmath,amssymb,mathtools}
\usepackage{enumitem}
\usepackage{microtype}
\usepackage[colorlinks=true,linkcolor=blue,citecolor=blue,urlcolor=blue]{hyperref}

\allowdisplaybreaks
\numberwithin{equation}{section}
\hypersetup{
  pdftitle={Limit laws for component-pruned sparse random graphs and percolated tori},
  pdfauthor={Mostafa Mirabi and Saharon Shelah}
}

\newtheorem{theorem}{Theorem}[section]
\newtheorem{lemma}[theorem]{Lemma}
\newtheorem{proposition}[theorem]{Proposition}
\newtheorem{corollary}[theorem]{Corollary}

\theoremstyle{definition}
\newtheorem{definition}[theorem]{Definition}
\newtheorem{remark}[theorem]{Remark}

\newcommand{\N}{\mathbb N}
\newcommand{\Z}{\mathbb Z}
\newcommand{\Prob}{\mathbb P}
\newcommand{\Ex}{\mathbb E}
\newcommand{\Var}{\operatorname{Var}}
\newcommand{\Aut}{\operatorname{Aut}}
\newcommand{\Prune}{\operatorname{Prune}}
\newcommand{\qr}{\operatorname{qr}}
\newcommand{\Po}{\operatorname{Poisson}}

\title[Limit laws for pruned sparse graphs and tori]{Limit laws for component-pruned sparse random graphs and percolated tori}

\author{Mostafa Mirabi}
\address{\newline The Taft School, Watertown, CT 06795, USA and \newline  Wesleyan University, Middletown, CT 06459, USA}
\email{mmirabi@wesleyan.edu}
\urladdr{https://sites.google.com/site/mostafamirabi}

\author{Saharon Shelah}
\address{\newline Hebrew University of Jerusalem, Jerusalem, Israel}
\email{saharon.shelah@mail.huji.ac.il}
\urladdr{https://shelah.logic.at}

\subjclass[2020]{Primary 03C13, 05C80 ; Secondary 60K35}
\keywords{zero--one law, convergence law, monadic second-order logic, sparse random graph, component pruning, percolation, discrete torus, Hanf locality}

\thanks{The second author research partially supported by the Israel Science Foundation
(ISF) grant no: 1838/19, and Israel Science Foundation (ISF) grant no: 2320/23; Research partially
supported by the grant “Independent Theories” NSF-BSF, (BSF 3013005232). This is publication
number 1277 of the second author.}

\begin{document}
\raggedbottom

\begin{abstract}
We prove an $\mathrm{MSO}_2$ zero--one law for a very sparse
Erd\H{o}s--R\'enyi graph after pruning by component order.  Let $p_n=c_n/n$,
where $c_n\to0$, and delete every component of order less than $f(n)$, where
$f(n)\to\infty$.  If
\[
        f(n)\bigl(\log f(n)+\log(1/c_n)\bigr)=o(\log n),
\]
then the resulting graph satisfies a zero--one law for $\mathrm{MSO}_2$,
with quantification over sets of vertices and sets of edges.  The proof
combines uniform component counts, an MSO Feferman--Vaught decomposition for
disjoint unions, and semilinearity of the order spectra of MSO-definable
classes of finite trees.  We also show that the term $f(n)\log f(n)$ cannot
simply be omitted: star components can occur at first-order-visible Poisson
thresholds.

We further establish first-order limit laws for bond percolation on the
discrete torus $T_L^d$.  In the two-sided subpolynomial regime, pruning below
a sufficiently slow threshold yields a zero--one law.  For the unpruned model
in either one-sided polynomial regime, the reciprocal exponents
$\alpha=1/k$ are precisely the critical scales.  At such a scale, an extended
limit of $N p_N^k$ or $N q_N^k$ equal to $0$ or $\infty$ gives a zero--one law;
a positive finite limit gives a convergence law but not a zero--one law; and
the absence of an extended limit gives failure of convergence.  Finally,
$\mathrm{MSO}_1$ already detects the parity of the torus side length through
bipartiteness, producing a natural obstruction to monadic convergence in a
near-deterministic regime.
\end{abstract}

\maketitle

\section{Introduction}

The Erd\H{o}s--R\'enyi model is a basic meeting point of probability,
combinatorics, and logic \cite{ER}.  A classical starting point is the
zero--one law for the dense random graph, proved independently by Glebskii,
Kogan, Liogon'kii, and Talanov and by Fagin \cite{Glebskii,Fagin}: for fixed
edge probability, every first-order graph property has limiting probability
$0$ or $1$.  Sparse random graphs are subtler because local configurations
appear at different scales.  The work of Shelah--Spencer and Lynch gives a
detailed model-theoretic analysis in important sparse regimes
\cite{ShelahSpencer,Lynch}.  Locality is central to many such results: once
the quantifier rank is fixed, bounded neighborhoods and their multiplicities
control the relevant first-order information.  We use the finite
bounded-degree form of Hanf locality for this purpose; see
\cite{Hanf,FaginStockmeyerVardi,Gaifman,Libkin}.

This paper studies logical limit laws in two related sparse settings.  The
first is obtained by pruning according to component order.  For a finite
graph $G$ and $m\geq1$, define
\[
   \Prune_m(G)=
   G\upharpoonright\{v\in V(G):|C_G(v)|\geq m\},
\]
where $C_G(v)$ is the connected component containing $v$.  Thus
$\Prune_m(G)$ is obtained by deleting every component of order less than
$m$.  This operation is unrelated to the usual graph-theoretic $k$-core,
which iteratively deletes vertices of small degree.  For a sequence $(G_n)$
and an integer-valued function $f$, we abbreviate
$\Prune_{f(n)}(G_n)$ to $\Prune_f(G_n)$.

We first apply this operation to $G(n,c_n/n)$ with $c_n\to0$.  Before
pruning, the graph is a forest with probability tending to one, and its tree
components can be counted by standard first- and second-moment estimates
\cite{Bollobas,JLR}.  After pruning with $f(n)\to\infty$, no fixed finite
component remains.  At each fixed monadic quantifier rank, every component
type of bounded order disappears, whereas every type having arbitrarily
large tree representatives occurs many times.  An MSO Feferman--Vaught
composition theorem then turns this deterministic component profile into an
$\mathrm{MSO}_2$ zero--one law.

The term $f(n)\log f(n)$ in our hypothesis is essential for this argument.
First-order logic can detect a star component by asking for a vertex all of
whose neighbors are leaves.  If $t=t(n)\to\infty$ and $t=o(n^{1/2})$, the
expected number of star components on $t$ vertices in $G(n,c_n/n)$ is
\[
       (1+o(1))\,n\frac{c_n^{t-1}}{(t-1)!}e^{-c_n t}.
\]
Consequently, the weaker condition
$f(n)\log(1/c_n)=o(\log n)$ still permits a first-order-visible Poisson
threshold at the pruning boundary.  The stronger assumption
\[
       f(n)\bigl(\log f(n)+\log(1/c_n)\bigr)=o(\log n)
\]
excludes this obstruction, and we give an explicit counterexample under the
weaker condition.

Our second setting is bond percolation on the discrete torus
\[
   T_L^d=(\Z/L\Z)^d,
\]
where adjacent vertices differ by $\pm1$ in exactly one coordinate.  Write
$N=L^d$.  In the random subgraph $X_L^d(q_N)$, every torus edge is retained
with probability $q_N$ and omitted with probability $p_N=1-q_N$,
independently.  Because the degree is bounded, Hanf locality applies
directly.  In the two-sided subpolynomial regime, a slowly growing pruning
threshold removes all bounded components while leaving many copies of every
local type that can occur in an infinite subgraph of $\Z^d$.  This yields a
first-order zero--one law.

We also classify the unpruned model in the one-sided polynomial regimes.
Suppose, for example, that $p_N\to0$ and $q_N\to1$.  Each bounded rooted
neighborhood type has a defect number: the least number of omitted edges
needed to produce it from the full lattice neighborhood.  A type of defect
$b$ has expected count of order $N p_N^b$.  Hence the reciprocal exponents
$\alpha=1/k$ are the critical scales when
$p_N=N^{-\alpha+o(1)}$.  Away from those exponents, every local type is either
absent or saturated.  At a reciprocal exponent, finite-range defect clusters
have Poisson limits, while rooted type counts are bounded cluster functions
of those Poisson variables.  This gives convergence at a finite critical
parameter, zero--one laws at the two extended endpoints, and failure of
convergence when the critical parameter itself does not converge.

The monadic behavior on tori is different.  Although component composition
controls $\mathrm{MSO}_2$ on forests, monadic logic can express global
properties beyond first-order locality on tori.  In fact,
$\mathrm{MSO}_1$ already expresses bipartiteness and therefore detects the
parity of long coordinate cycles.  This yields a direct obstruction to
monadic convergence when the side length ranges over both even and odd
integers.

\subsection*{Relation to previous work}

The local-pattern analysis on tori belongs to an established line of work on
logical laws for random lattice fields.  Coupier, Desolneux, and Ycart proved a
first-order zero--one law for independent random images by combining Gaifman
locality with threshold estimates for finite pixel patterns
\cite{CoupierDesolneuxYcartImages}.  Their work on statistical area
thresholding also studies the removal of small connected components as an
image-denoising operation, using Poisson approximation to calibrate the area
cutoff \cite{CoupierDesolneuxYcartArea}.  Coupier, Doukhan, and Ycart extended
the logical local-pattern method to dependent binary fields on lattice tori
under mixing assumptions \cite{CoupierDoukhanYcart}.  Our first-order torus
arguments use this same locality-and-counting mechanism.  The features
specific to our torus theorems are the bond-percolation graph language,
pruning of percolation components, and the explicit organization of the
one-sided critical regimes by the defect quantities $N p_N^k$ and $N q_N^k$.
Thus the unpruned polynomial classification is an edge-percolation refinement
of this framework rather than a new general locality principle.

There is also a substantial literature on monadic limit laws for random
tree-like graphs.  Woods obtained MSO limiting probabilities for random
rooted trees through finite coloring rules \cite{Woods}; McColm proved MSO
zero--one laws for random labeled acyclic graphs \cite{McColm}.  More broadly,
Heinig, M\"uller, Noy, and Taraz established an MSO zero--one law for connected
random graphs and an MSO convergence law for unrestricted random graphs from
addable minor-closed classes \cite{HeinigMullerNoyTaraz}.  On the negative
side, Kaufmann and Shelah showed that the MSO zero--one law can fail very
strongly for uniform random finite relational structures
\cite{KaufmannShelah}.  For sparse Erd\H{o}s--R\'enyi graphs,
Ostrovsky and Zhukovskii studied bounded-quantifier-depth MSO laws in the very
sparse power-law range \cite{OstrovskyZhukovskii}.  The present result has a
different source: a deterministic component-size filter is applied to
$G(n,c_n/n)$ with $c_n\to0$, and the conclusion is a zero--one law for every
$\mathrm{MSO}_2$ sentence, including quantification over edge sets.  Its proof
uses the MSO disjoint-union composition theorem and regular-tree size spectra,
rather than a new local normal form for MSO.

\subsection*{Main results}

For the Erd\H{o}s--R\'enyi part, write $G(n,p)$ for the random graph on
$[n]$ in which edges are present independently with probability $p$.  The
principal theorem of the paper is the following; its proof is completed in
Section~\ref{sec:mso} after the required MSO composition tools are developed.

\begin{theorem}\label{thm:ER-mso}
Let $p_n=c_n/n$, where $c_n>0$ and $c_n\to0$.  Let
$f:\N\to\N$ satisfy $f(n)\to\infty$ and
\[
        f(n)\bigl(\log f(n)+\log(1/c_n)\bigr)=o(\log n).
\]
Put
\[
        H_n=\Prune_{f(n)}(G(n,p_n)).
\]
Then $(H_n)$ has an $\mathrm{MSO}_2$ zero--one law.  In particular, it has an
$\mathrm{MSO}_1$ zero--one law and a first-order zero--one law.
\end{theorem}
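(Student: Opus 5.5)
Fix a quantifier rank $k$. The plan is to reduce the $\mathrm{MSO}_2$ theory of $H_n$ of rank $k$ to a deterministic ``component profile'' and then show that this profile is asymptotically almost surely constant.

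\textbf{Composition.} By the MSO Feferman--Vaught theorem for disjoint unions (with $\mathrm{MSO}_2$ handled by working over the incidence structure), the rank-$k$ $\mathrm{MSO}_2$ type of a disjoint union of connected graphs is determined by the following data. There are finitely many rank-$k$ types $\tau$ of connected graphs. For each $\tau$, count the components of type $\tau$, truncated at a threshold $M=M(k)$. So it suffices to show that, asymptotically almost surely, for every $\tau$ the truncated count $\min(M,\#\{\text{components of }H_n\text{ of type }\tau\})$ equals a fixed value independent of $n$.

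First, $H_n$ is a forest with probability $1-O(c_n)\to1$: the expected number of cycles is $\sum_\ell c_n^\ell/(2\ell)=O(c_n)$. So only tree types matter, and every component has order at least $f(n)$.

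Define $S_\tau$ to be the set of orders of finite trees of type $\tau$. The class of trees of type $\tau$ is $\mathrm{MSO}$-definable, so $S_\tau$ is semilinear, hence eventually periodic. The dichotomy is then:
\begin{itemize}[label={}]
\item if $S_\tau$ is finite, no component of type $\tau$ survives once $f(n)>\max S_\tau$;
\item if $S_\tau$ is infinite, it contains an arithmetic progression $\{a+bj\}$, and we claim that asymptotically almost surely there are at least $M$ components of type $\tau$.
\end{itemize}

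\textbf{Counting.} For the infinite case, pick an order $t\in S_\tau$ with $f(n)\le t\le f(n)+b$ and a fixed tree $T_n$ of type $\tau$ on $t$ vertices. The expected number of components isomorphic to $T_n$ is
\[
\mu_n=\binom{n}{t}\frac{t!}{|\Aut T_n|}\,p_n^{t-1}(1-p_n)^{\binom t2-(t-1)+t(n-t)}\ \ge\ n\,c_n^{t-1}e^{-c_nt-o(1)}/t!.
\]
Taking logarithms gives
\[
\log\mu_n\ge \log n-t\log(1/c_n)-t\log t-O(t)=\log n-o(\log n)\to\infty
\]
by the hypothesis $f(n)(\log f(n)+\log(1/c_n))=o(\log n)$; this is exactly where that hypothesis is used. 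Here $\log(1/c_n)\ge 0$ eventually, and the $O(t)$ term is absorbed since $t\log t$ dominates. A second-moment computation for isolated copies of $T_n$ gives $\Var=\mu_n(1+o(1))+o(\mu_n^2)$, because components on disjoint vertex sets are nearly independent when $t^2p_n\to0$. By Chebyshev, the count is at least $M$ asymptotically almost surely.

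To avoid the subtlety that $T_n$ depends on $n$, one may instead sum over all trees of type $\tau$ and order $t$; it is enough to use one specific tree, as above. There are finitely many $\tau$, so a union bound finishes the argument. The resulting profile (count $0$ for finite $S_\tau$, count $\ge M$ for infinite $S_\tau$) is deterministic, so every rank-$k$ $\mathrm{MSO}_2$ sentence has limiting probability $0$ or $1$.

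\textbf{Main obstacle.} I expect the hardest step to be justifying the composition and semilinearity inputs for $\mathrm{MSO}_2$ rather than $\mathrm{MSO}_1$. On forests this is mild, because edge-set quantification on trees of bounded treewidth is interpretable in the incidence graph, and the incidence graph of a tree is again a tree. Spectra of MSO classes of trees are semilinear by the standard automaton/Parikh argument.

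A second point needing care is that the second moment must be uniform in the growing order $t$. This is the reason the counting step uses a single explicit tree $T_n$, and it requires checking that $\mu_n\to\infty$ dominates the correction terms, which are of order $t^2c_n/n$.
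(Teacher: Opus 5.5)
Your proposal is correct and follows essentially the paper's route: an $\mathrm{MSO}_2$ Feferman--Vaught reduction over the incidence structure to thresholded counts of component types, then semilinearity of tree-order spectra via automata and Parikh's theorem to find one tree of each unbounded type with order just above $f(n)$, then first- and second-moment counts of components isomorphic to that tree, with the hypothesis used exactly to obtain $\log\mu_n=(1-o(1))\log n$. The differences are cosmetic: you choose the order in an additive window $[f(n),f(n)+b]$ of a single arithmetic progression rather than the paper's multiplicative window $[f(n),C_sf(n)]$, and you classify components by their rank-$k$ $\mathrm{MSO}_2$ types rather than by the Boolean cells produced by the composition theorem.
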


The displayed hypothesis already implies
$\log(1/c_n)=o(\log n)$, so no separate subpolynomial assumption on $c_n$ is
needed.

We first isolate the first-order component profile, both as a warm-up and for
later comparison with the bounded-degree torus argument.

\begin{theorem}\label{thm:ER-core}
Let $p_n=c_n/n$, where $c_n>0$ and $c_n\to0$.  Let
$f:\N\to\N$ satisfy $f(n)\to\infty$ and
\[
       f(n)\bigl(\log f(n)+\log(1/c_n)\bigr)=o(\log n).
\]
Put
\[
       H_n=\Prune_{f(n)}(G(n,p_n)).
\]
Then $(H_n)$ has a first-order zero--one law.  More precisely, for every
quantifier rank $s$ and every fixed $K$, with probability tending to one
$H_n$ is a forest, has no component of any bounded $s$-component type, and
has at least $K$ components of every unbounded $s$-component type.
Consequently, for every $s$ there is a deterministic $s$-theory of forests
which $H_n$ has with probability tending to one.  This is the universal
large-component forest profile described in
Proposition~\ref{prop:forest-profile}.
\end{theorem}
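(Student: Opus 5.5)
The plan is to establish three high-probability facts about $H_n$ and then combine them with the standard composition theory for first-order logic on disjoint unions. Fix a quantifier rank $s$. Call a component type (the $s$-theory of a connected finite graph) \emph{bounded} if it is realized only by trees of order at most some $M_s$, and \emph{unbounded} if it is realized by trees of arbitrarily large order. Since there are only finitely many rank-$s$ types, a single $M_s$ works for all bounded types. The three facts are: (i) $G(n,p_n)$ is a forest w.h.p.; (ii) once $f(n)>M_s$, no component of $H_n$ has a bounded type, which is immediate from pruning; and (iii) every unbounded type $\tau$ occurs at least $K$ times w.h.p.

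For (i), the expected number of cycles is at most $\sum_{k\ge3} c_n^k/(2k)=O(c_n^3)\to0$, so this is routine.

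The real content is (iii). For an unbounded type $\tau$, I will choose a concrete tree $T$ with type $\tau$ and order $t=t(n)$ in the window $[f(n),\,C f(n)]$. I expect such a tree to exist for some constant $C$, which is where semilinearity or periodicity of the order spectrum of a first-order-definable class of trees is needed. For first-order logic this can be done by hand: Ehrenfeucht--Fraïssé pumping, for example lengthening a long path or duplicating a subtree $2^s$ times, shows that from one large representative one gets representatives of all sufficiently large orders within an arithmetic progression. Alternatively, one can cite the semilinearity result the paper develops.

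Let $X$ count isolated components isomorphic to $T$. Then
\[
\Ex X=(1+o(1))\,\frac{n^t}{\Aut(T)\text{-}\mathrm{sym}}\,p_n^{t-1}(1-p_n)^{tn-O(t^2)}\ \ge\ n\,\frac{c_n^{t-1}}{t!}\,e^{-c_n t}(1+o(1)).
\]
Its logarithm is $\log n-O\bigl(t\log t+t\log(1/c_n)\bigr)$, and since $t=O(f(n))$ the hypothesis $f(\log f+\log(1/c_n))=o(\log n)$ gives $\Ex X\ge n^{1-o(1)}\to\infty$. The second moment comes from disjoint copies: $\Ex X^2\le \Ex X+(\Ex X)^2(1-p_n)^{-t^2}(1+o(1))$. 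Because $t^2p_n\to0$, Chebyshev's inequality then gives $X\ge K$ w.h.p.

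Finally, the first-order Feferman--Vaught theorem for disjoint unions tells us that the $s$-theory of $H_n$ is determined by, for each component type, its multiplicity truncated at a threshold $K=K(s)$. W.h.p. this truncated profile is the deterministic one: $0$ for bounded types and at least $K$ for unbounded types. This is the profile of Proposition~\ref{prop:forest-profile}. One edge case needs attention: if there are no unbounded types then $H_n$ would be empty, but that cannot happen, since long paths are unbounded.

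The main obstacle I expect is the choice of $t$ in the window. The size estimate needs $t=O(f(n))$, which is exactly where $f\log f$ enters, and the representative must have order at least $f(n)$. Controlling the gaps in the order spectrum of $\tau$ is therefore the key step.
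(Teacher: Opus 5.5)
Your proposal is correct and follows the paper's proof. The common steps are acyclicity by a first-moment bound, exclusion of bounded types by pruning, and a representative of each unbounded type with order in $[f(n),C_s f(n)]$ (the paper obtains this window from Lemma~\ref{lem:tree-spectrum} via Lemma~\ref{lem:pumping}). These are followed by the first and second moment count of Proposition~\ref{prop:ER-components} and the truncated component-multiplicity composition of Lemma~\ref{lem:forest-FV}. One caveat applies only if you do the window step by hand rather than citing semilinearity: on a long path that is not bare, do not simply insert path vertices. Instead, repeat a segment between two path positions whose prefix rank-$s$ types, with the endpoint as a constant, coincide. A single infinite arithmetic progression in each spectrum then already yields the uniform constant, as you noted.
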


For the torus, fix $d\geq1$ and let
\[
   T_L^d=(\Z/L\Z)^d
\]
with an edge between two vertices if they differ by $1$ or $-1$ modulo $L$ in
exactly one coordinate and agree in all other coordinates.  The number of
vertices is $N=L^d$.  In bond percolation on $T_L^d$, each edge is retained
with probability $q_N$ and omitted with probability $p_N=1-q_N$,
independently.  The resulting random subgraph is denoted $X_L^d(q_N)$.

\begin{theorem}\label{thm:torus-core}
Fix $d\geq1$.  Let $N=L^d$ and let $X_L=X_L^d(q_N)$.  Put
\[
        \eta_N=\min\{p_N,q_N\}.
\]
Let $f:\N\to\N$ be integer-valued.  Assume $\eta_N>0$ eventually,
$f(N)\to\infty$, and
\[
        f(N)\log(1/\eta_N)=o(\log N).
\]
Then
\[
        Y_L=\Prune_{f(N)}(X_L)
\]
has a first-order zero--one law.  Its almost-sure first-order theory is
determined by the following axiom schemes:
\begin{enumerate}[label=\textup{(\roman*)}]
\item the maximum degree is at most $2d$, and, for every $r\geq0$ and every
rooted $r$-ball type $\tau$ that is not a $d$-lattice type, no vertex has
rooted $r$-ball type $\tau$;
\item for each $m\geq1$, there is no connected component of size exactly $m$;
\item for every $r\geq0$, every rooted $r$-ball type $\tau$ realized at a
vertex of an infinite component of a subgraph of $\Z^d$, and every $h\geq1$,
there are at least $h$ distinct vertices with rooted $r$-ball type $\tau$.
\end{enumerate}
Each item is a genuine first-order scheme: bounded degree, a fixed finite
rooted ball type, the existence of a component of a fixed finite size, and
the existence of $h$ distinct realizations of a fixed type are all
first-order expressible.
\end{theorem}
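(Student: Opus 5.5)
The proof has three parts. First, every finite model of the three schemes has a single complete first-order theory at each quantifier rank. Second, each scheme holds with probability tending to one. Third, Hanf locality in its finite bounded-degree form joins these. For the first part, fix $s$ and the Hanf radius $r=r(s)$ and threshold $h=h(s)$ that go with degree bound $2d$. Two graphs of maximum degree at most $2d$ are $\equiv_s$ if, for every rooted $r$-ball type $\tau$, they have the same number of vertices of type $\tau$ or both have at least $h$. Scheme (i) says every type realized is a $d$-lattice type. Scheme (iii) says every such type that arises in an infinite component of a subgraph of $\Z^d$ occurs at least $h$ times. So we must check that any $d$-lattice $r$-ball type realized in a graph satisfying (ii) is one of the types listed in (iii). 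This uses (ii) for components of size at most the ball size. If a ball type contains a whole finite component, that component has bounded size, which (ii) forbids. Otherwise the ball is a partial ball inside a larger lattice-like structure, and it extends to a subgraph of $\Z^d$ with an infinite component through the root: keep a boundary edge, and continue along a ray in $\Z^d$ leaving the ball. This requires a precise definition of "$d$-lattice type", namely a ball isomorphic to a rooted ball in a subgraph of $\Z^d$. Once this is checked, all models of the schemes are pairwise $\equiv_s$, so the schemes are complete.

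For scheme (i), a torus ball of radius $r$ with $L>2r+1$ is isomorphic to a ball in $\Z^d$, so every realized type is a lattice type. For $L\le 2r+1$ the statement is vacuous in the limit. For scheme (iii), fix a type $\tau$ realized in a subgraph $S$ of $\Z^d$ with an infinite component. Choose a witnessing configuration $B$ that determines $\tau$ and also forces the root's component to have size at least $f(N)$. Let $B$ be a pattern on a box of radius $R\approx r+f(N)$: the state of the $r$-ball edges and a boundary layer are fixed as in $S$, and a self-avoiding path of length $f(N)$ is open and leaves the root's ball. Cutting a finite piece out of $S$ along the infinite component gives such a path. The pattern fixes $O(f(N)+1)$ edge states, and the implied constants depend on $r,d$ and on the pattern. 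So its probability is at least $\eta_N^{C f(N)}=N^{-o(1)}$ by the hypothesis $f(N)\log(1/\eta_N)=o(\log N)$. Placing such patterns in $N/f(N)^{O(d)}=N^{1-o(1)}$ disjoint boxes gives independent trials, and at least $h$ of them succeed with high probability. Pruning does not change the ball of a vertex whose component has size at least $f(N)$, because whole components are either kept or deleted. So these vertices witness $\tau$ in $Y_L$.

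Scheme (ii) holds deterministically once $f(N)>m$. Scheme (i) also survives pruning, since $Y_L$ is an induced union of components. Its balls are therefore balls of subgraphs of the torus, which are lattice types once $L$ is large.

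The main obstacle is the scheme-(iii) lower bound. The local pattern has to fix only $O(f(N))$ edges, and we want one pattern that certifies both the ball type and a component size of at least $f(N)$. A fully open or fully closed box would need $f(N)^d$ edges, which is too many when $d\ge2$. Using a single path instead keeps the cost linear, and linear cost is exactly what the hypothesis allows. The completeness step is the second delicate point: we must check that (i) and (ii) together leave only the types named in (iii).
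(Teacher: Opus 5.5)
Your treatment of schemes (i) and (ii) matches the paper. So does your witness for scheme (iii): a fixed decision window plus $O(f(N))$ retained edges along a ray of an infinite component, probability $N^{-o(1)}$, and $N^{1-o(1)}$ disjoint translates, as in Lemma~\ref{lem:torus-witness}. The gap is the step you flag as delicate: that every rooted $r$-ball type occurring in $Y_L$ (or in a model of the schemes) is unbounded. This step is needed for the zero--one law itself, not only for the axiomatization. The conclusion is true, but your justification fails. You argue that if the root component is not inside the ball, one can take a lattice realization of the ball, keep a boundary edge, and run a ray of $\Z^d$ out of the ball. But a lattice realization of an $r$-ball can enclose its distance-$r$ vertices in a bounded hole.

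For example, in $d=2$ let the ball consist of a thick annulus of complete unit squares, all at distance less than $r$ from the root. Add a path from a non-corner inner boundary vertex into the hole, ending at distance exactly $r$. The annulus is rigid in $\Z^2$, since every $4$-cycle of $\Z^2$ is a unit square. A vertex beyond distance $r$ can neither occupy nor be adjacent to an annulus vertex. So in every realization the root component stays inside the annulus and its hole, and the type is not unbounded even though the root component leaves the ball. This is why the paper notes that unboundedness is stronger than the root component reaching distance $r$. Using (ii) only up to the ball size does not rescue the argument either. Grow a tree in the hole from the end of the path: the result is a finite subgraph of $\Z^2$. It satisfies (i) at every radius and (ii) for every $m$ below its order, which exceeds the ball size, yet it realizes this type.

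What is missing is an embedding of the ball that is coherent with a long path, and the relevant size bound is $(2r+1)^d$, not the ball size. If the component of $v$ has more than $(2r+1)^d$ vertices, some vertex of it lies outside the box $v+[-r,r]^d$. Take one nearest to $v$ and a shortest path to it, and embed the ball together with this path. Then continue from the exit vertex along the outward coordinate ray, which never re-enters the box. For $Y_L$ the embedding comes from lifting the torus, as in Lemma~\ref{lem:torus-no-bounded}. For an abstract model you need scheme (i) at radii larger than $r$. A single radius $R\geq(2r+1)^d$ with the same exit argument suffices, or all radii together with K\H{o}nig's lemma, as in the paper.

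A smaller point: the schemes have no finite models, since scheme (iii) at $r=0$ already demands arbitrarily many vertices. So your ``finite models'' formulation is vacuous. The completeness claim concerns infinite models, and it needs Hanf locality for infinite bounded-degree graphs, as stated in Theorem~\ref{thm:Hanf}, rather than only the finite form.
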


Because $\eta_N\leq1/2$, the hypotheses imply
$f(N)=o(\log N)=o(L)$ and $\log(1/\eta_N)=o(\log N)$.

The following theorem gives the unpruned one-sided polynomial answer in
dimensions at least two.  The one-dimensional case is different because
deleting an edge from a cycle is a cut; see Remark~\ref{rem:one-dimensional}.

\begin{theorem}\label{thm:polynomial}
Fix $d\geq2$, let $N=L^d$, and let $X_L=X_L^d(q_N)$.
\begin{enumerate}[label=\textup{(\alph*)}]
\item Suppose $p_N\to0$, $q_N\to1$, and
$p_N=N^{-\alpha+o(1)}$ for some $\alpha>0$.
      \begin{enumerate}[label=\textup{(\roman*)}]
      \item If $\alpha\notin\{1/k:k\in\N\}$, then $(X_L)$ has a first-order
      zero--one law.
      \item If $\alpha=1/k$ and $\lambda_N=N p_N^k\to0$ or
      $\lambda_N\to\infty$, then $(X_L)$ has a first-order zero--one law.
      \item If $\alpha=1/k$ and
      $\lambda_N=N p_N^k\to\lambda\in(0,\infty)$, then $(X_L)$ has a
      first-order convergence law but not a zero--one law.
      \item If $\alpha=1/k$ and $\lambda_N=N p_N^k$ has no limit in the
      extended interval $[0,\infty]$, then $(X_L)$ does not have a
      first-order convergence law.
      \end{enumerate}
\item The same four alternatives hold with $p_N$ and $q_N$ interchanged.
Thus, if $q_N\to0$, $p_N\to1$, and $q_N=N^{-\alpha+o(1)}$, the critical
quantity at $\alpha=1/k$ is $N q_N^k$.
\end{enumerate}
\end{theorem}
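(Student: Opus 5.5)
The plan is to reduce everything, via Hanf locality for graphs of maximum degree at most $2d$, to counting vertices of each rooted $r$-ball type. For a fixed quantifier rank $s$ there are a radius $r=r(s)$ and a threshold $t=t(s)$ such that two graphs of maximum degree at most $2d$ satisfy the same rank-$s$ sentences whenever, for every rooted $r$-ball type $\tau$, their numbers of $\tau$-centres agree or are both at least $t$. So it suffices to find the joint limiting law of the truncated counts $\min\{Z_\tau,t\}$, where $Z_\tau$ is the number of vertices of $X_L$ with type $\tau$. We treat part~(a); part~(b) is the same argument with the roles of present and absent edges exchanged, since the $r$-ball of a vertex in $X_L$ is determined by which lattice edges in the $r$-ball of $T_L^d$ are present, and for $L>2r+1$ that ball is isomorphic to the lattice ball.

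\textbf{Defects.} For a type $\tau$ that occurs in some edge subgraph of the lattice ball $B_r\subset\Z^d$, let $b(\tau)$ be the least number of edges that must be omitted from $B_r$ to obtain $\tau$. We then bound moments of $Z_\tau$.
\begin{itemize}[label=--]
\item The mean satisfies $\Ex Z_\tau=\Theta(N p_N^{b})$ with $b=b(\tau)$, because $q_N\to1$ and the number of omission patterns is bounded.
\item Write $\mu_\tau=\lim \log_N(Np_N^{b})=1-\alpha b$. If $\mu_\tau<0$, Markov's inequality gives $Z_\tau=0$ with high probability.
\item If $\mu_\tau>0$, a second-moment bound gives $Z_\tau\geq t$ with high probability. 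Balls at distance greater than $2r$ are independent, so $\Var Z_\tau=O(\Ex Z_\tau)$. Note that a minimal-defect realization only requires $b$ edges absent, all near the root, so the defect pattern sits inside a bounded window.
\end{itemize}
If $\alpha\neq1/k$ for every $k$, then $1-\alpha b\neq0$ for all integers $b$. Every type is therefore almost surely absent or saturated, and this determines the rank-$s$ theory. That proves~(i).

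\textbf{The critical exponent $\alpha=1/k$.} Types with $b<k$ are saturated and types with $b>k$ are absent, but we must check the latter uniformly: the probability that some vertex sees more than $k$ absent edges within distance $2r$ is $O(Np_N^{k+1})\to0$. With high probability, then, every set of absent edges of diameter at most $2r+1$ has size at most $k$. The types with $b=k$ are exactly those whose ball contains a cluster of $k$ absent edges. A cluster here means $k$ absent edges within a bounded distance of one another and far from any other absent edge, so clusters are disjoint from one another with high probability.

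For each translation class $\sigma$ of $k$-edge configurations of diameter at most $2r+1$, let $W_\sigma$ count its occurrences. Then $\Ex W_\sigma\sim N\lambda_N\cdot(\text{const})$, and more precisely $\Ex W_\sigma=\Theta(\lambda_N)$ up to the bounded number of orientations, since there are $N$ positions and each occurrence has probability $p_N^k$. The Chen--Stein method, or the method of factorial moments with dependence only inside bounded windows, gives joint Poisson convergence of $(W_\sigma)_\sigma$ with means $c_\sigma\lambda$ whenever $\lambda_N\to\lambda$. An isolated cluster of class $\sigma$ contributes a fixed multiset of rooted types of defect exactly $k$. Hence each truncated count $\min\{Z_\tau,t\}$ with $b(\tau)=k$ is a fixed function of the truncated vector $(W_\sigma)$.

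This yields the cases at $\alpha=1/k$:
\begin{itemize}[label=--]
\item Case~(ii): if $\lambda_N\to0$, all $W_\sigma=0$ with high probability; if $\lambda_N\to\infty$, Poisson approximation, or a direct second moment, gives every $W_\sigma\geq t$. Either way the theory is determined.
\item Case~(iii): the rank-$s$ theory is a function of a convergent-in-law vector, so every sentence has a limiting probability. The sentence asserting that some vertex has the type of the centre of a single $k$-cluster has limit $1-e^{-c\lambda}\in(0,1)$, so there is no zero--one law.
\item Case~(iv): choose subsequences along which $\lambda_N$ tends to two different extended limits. That same sentence then has two different subsequential limiting probabilities, so there is no convergence law.
\end{itemize}

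The main obstacle is twofold. First, we must verify the bookkeeping claim that, with high probability, rooted $r$-ball types of defect $k$ arise only from isolated $k$-clusters and are determined by them; this needs $d\geq2$, so that a bounded number of omitted edges does not disconnect the ball in ways that interact with the global structure. Second, we need the uniform first-moment exclusion of overlapping clusters of total size greater than $k$. For part~(b) the same reasoning runs with present edges as defects, and the centre of an isolated $k$-edge subgraph is the witness type.
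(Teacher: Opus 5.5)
Your reduction to truncated Hanf profiles, the defect $b(\tau)$, and the absent/saturated dichotomy off criticality follow the paper's route; your bound $\Var Z_\tau=O(\Ex Z_\tau)$ is a fine substitute for the paper's disjoint-window packing. The Poisson cluster argument for the convergence law in (iii) is also sound. The gap is in the witness sentence used for the negative halves of (iii) and (iv).

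That argument needs the rooted type $\sigma$ (``the centre of a single $k$-cluster'') to have minimal defect \emph{exactly} $k$. If $b(\sigma)<k$, then $\sigma$ is saturated and the sentence has probability tending to $1$ for every $\lambda$, so it witnesses nothing. You never prove $b(\sigma)\geq k$. The heuristic you rely on is false in general: that types whose ball contains a $k$-cluster have defect $k$, and that an isolated cluster contributes only types of defect exactly $k$.

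For example, take $d=2$, root $0$, and radius $r\geq2$. Omitting both $\{(r-2,0),(r-1,0)\}$ and $\{(r-1,0),(r,0)\}$ gives the same rooted $r$-ball as omitting the first edge alone. In both cases $(r-1,0)$ and $(r,0)$ fall outside the ball and nothing else changes. So this $2$-cluster produces a type of defect $1$. This is harmless for the convergence law, since such types are saturated anyway, but it shows the witness needs an argument. The natural degree-counting bound also breaks down when a cluster cuts off a finite piece of the lattice, because omitted edges then have endpoints outside the root's ball.

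The missing ingredient is an explicit configuration together with a lower bound on its defect; this is the content of Lemma~\ref{lem:visible-critical-pattern}.
\begin{enumerate}
\item Take $k$ pairwise vertex-disjoint, well-separated edges $S$ parallel to the first axis, away from the root.
\item Give each a three-edge detour in the second coordinate direction, so that $\Z^d\setminus S$ is connected. This is where $d\geq2$ is genuinely used; the cluster bookkeeping itself does not need it.
\item Choose $R$ so that all $2k$ endpoints lie at distance $<R$. In $\sigma=B_{\Z^d\setminus S}(0,R)$ they are then interior vertices of degree $2d-1$.
\item In any realization, an interior vertex has all its retained neighbours inside the ball. So each of these $2k$ vertices is incident with an omitted edge, and one omitted edge accounts for at most two of them. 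Hence $b(\sigma)\geq k$.
\end{enumerate}
Only with this does your Poisson framework give the limit $1-e^{-a\lambda}$ with $a>0$, strictly increasing on $[0,\infty]$, as (iii) and (iv) require.

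For part (b) the analogous point is easy but must be stated. The witness should be a \emph{connected} $k$-edge subgraph containing the root, such as a straight path, observed at a radius exceeding its size. Then every edge of the ball is a retained edge, and any realization needs at least $k$ of them. A disconnected ``isolated $k$-edge subgraph'' is not seen from a single root.
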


The proof of the monadic theorem and the contrasting torus obstructions are
given in Section~\ref{sec:mso}.  Proposition~\ref{prop:mso-torus-obstruction}
and Corollary~\ref{cor:mso-pruned-torus-obstruction} show that the torus
statements have no direct monadic analogue, even in $\mathrm{MSO}_1$, without
additional global restrictions.

The paper is organized as follows.  Section~\ref{sec:locality} collects the
locality and finite-type tools used throughout the paper: Hanf locality for
bounded-degree graphs, a finite-rank replacement for forests of unbounded
degree, and the elementary counting lemmas for local witnesses and rare defect
patterns.  Section~\ref{sec:ER} proves the pruning theorem for very sparse
Erd\H{o}s--R\'enyi graphs and the star obstruction showing that the smallness
assumption on $f$ cannot be weakened in the most naive way.  Section~\ref{sec:torus}
proves the torus results: first the pruned middle-regime theorem, and then the
classification of the unpruned one-sided polynomial regimes.
Section~\ref{sec:mso} proves the main $\mathrm{MSO}_2$ theorem by component
composition and tree automata, and then gives the $\mathrm{MSO}_1$
bipartiteness obstruction for tori.

\subsection*{Notation and conventions}

All graphs are finite unless explicitly stated otherwise.  The first-order
language of graphs has one binary relation symbol $E$, interpreted as a
symmetric irreflexive adjacency relation.  A sequence of random graphs
$(X_n)$ has a first-order convergence law if $\Prob(X_n\models\varphi)$
converges for every first-order sentence $\varphi$, and it has a first-order
zero--one law if each of these limits is either $0$ or $1$.

For monadic second-order logic we use $\mathrm{MSO}_2$ in the usual
graph-theoretic sense: one may quantify over sets of vertices and over sets
of edges, equivalently over monadic predicates in the incidence structure of
the graph.  The fragment in which second-order variables range only over
subsets of the vertex set is denoted $\mathrm{MSO}_1$.  Monadic convergence
laws and monadic zero--one laws are defined analogously, with $\varphi$ ranging
over sentences of the corresponding monadic logic.

For a graph $G$ and a vertex $v$, $C_G(v)$ denotes the connected component of
$v$.  If $r\geq0$, then $B_G(v,r)$ denotes the induced rooted graph on the
vertices at graph distance at most $r$ from $v$, rooted at $v$.  If $\tau$ is
a rooted $r$-ball type, then $N_\tau(G)$ denotes the number of vertices of
$G$ whose rooted $r$-ball has type $\tau$.

We write $a_n=o(b_n)$ and $a_n=O(b_n)$ in the usual asymptotic sense.  The
phrase ``with high probability'' means with probability tending to one.  All
pruning thresholds are positive integers, and all threshold functions $f$
are positive integer-valued.  For the torus, $L$ is the side length, $d$ is
fixed, and $N=L^d$ is the number of
vertices.  In bond percolation on $T_L^d$, the retained-edge probability is
$q_N$ and the omitted-edge probability is $p_N=1-q_N$.

\section{Locality and finite-type tools}\label{sec:locality}

We first record the bounded-degree locality theorem and the finite-rank
composition facts used later.

\subsection{Hanf locality in bounded degree}

\begin{definition}
Let $\Delta,r,M\in\N$.  Two graphs $G$ and $H$ of maximum degree at most
$\Delta$ are \emph{$(r,M)$-Hanf-equivalent} if, for every rooted $r$-ball
type $\tau$ of degree at most $\Delta$, either
\[
   N_\tau(G)=N_\tau(H)<M,
   \qquad\text{or}\qquad
   N_\tau(G),N_\tau(H)\geq M.
\]
\end{definition}

\begin{theorem}[Bounded-degree Hanf locality]\label{thm:Hanf}
For every $\Delta$ and every first-order quantifier rank $s$, there are
$r=r(\Delta,s)$ and $M=M(\Delta,s)$ such that any two finite or infinite
graphs of maximum degree at most $\Delta$ that are $(r,M)$-Hanf-equivalent
satisfy the same first-order sentences of quantifier rank at most $s$.
\end{theorem}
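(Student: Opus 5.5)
The plan is to prove Theorem~\ref{thm:Hanf} by an Ehrenfeucht--Fra\"iss\'e argument: we show that Duplicator wins the $s$-round game on $G$ and $H$ whenever they are $(r,M)$-Hanf-equivalent for suitable $r,M$. We take radii $r_s=0$ and $r_{i}=3r_{i+1}+1$ going backwards, and put $r=r_0$. The invariant after $i$ rounds, with pebbles $\bar a$ in $G$ and $\bar b$ in $H$, is that the $r_i$-neighbourhoods $B_G(\bar a,r_i)$ and $B_H(\bar b,r_i)$ are isomorphic as structures with the pebbles named. At $i=s$ this gives a partial isomorphism, so Duplicator wins and the graphs agree on sentences of rank at most $s$.

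The maintenance step runs as follows. Suppose Spoiler plays $a'$ in $G$, and write $\rho=r_{i+1}$.
\begin{enumerate}[label=\textup{(\arabic*)}]
\item \emph{Near move.} If $a'$ lies within distance $2\rho+1$ of some $a_j$, then $B_G(a',\rho)\subseteq B_G(\bar a,3\rho+1)=B_G(\bar a,r_i)$. Duplicator answers with the image $b'$ of $a'$ under the given isomorphism. Restricting that isomorphism gives the invariant at radius $\rho$.
\item \emph{Far move.} Otherwise $B_G(a',\rho)$ is disjoint from, and non-adjacent to, $B_G(\bar a,\rho)$. Duplicator must then find $b'$ in $H$ with the same rooted $\rho$-ball type as $a'$ and at distance more than $2\rho+1$ from every $b_j$.
\end{enumerate}
The vertices of $H$ lying within distance $2\rho+1$ of the pebbles number at most $s\cdot\Delta^{2\rho+2}$, a bound depending only on $\Delta$ and $s$. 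So it suffices that the number of vertices of type $\tau=\operatorname{type}(B_G(a',\rho))$ is the same in $G$ and $H$, or is at least $M$ in both, where
\[
M> s\Delta^{2r+2}+1.
\]
In the first case we need the further fact that the excluded vertices near the pebbles have matching counts on each side. This holds because those vertices, together with their $\rho$-balls, lie inside the isomorphic $r_i$-neighbourhoods, since $3\rho+1\le r_i$.

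The main obstacle is a mismatch of radii. Hanf-equivalence is stated only for radius-$r$ types, but Duplicator needs counts of $\rho$-types for smaller $\rho$. This is resolved by observing that for $\rho\le r$ each $\rho$-ball type is determined by the $r$-ball type. Hence $N_\sigma(G)$ for a $\rho$-type $\sigma$ is the sum of $N_\tau(G)$ over the finitely many $r$-types $\tau$ of degree at most $\Delta$ that refine $\sigma$. Each summand is either equal in $G$ and $H$, or is at least $M$ in both graphs. Summing therefore preserves the condition ``equal, or both at least $M$''.

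For infinite graphs the same argument works verbatim: all balls are finite by bounded degree, and counts may be infinite, which simply falls under ``$\geq M$''. Finally, $r$ and $M$ depend only on $\Delta$ and $s$, as the statement requires.
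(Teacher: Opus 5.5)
Your argument is correct. It differs from the paper only in that the paper gives no proof of Theorem~\ref{thm:Hanf}: it cites Hanf for the general bounded-degree form and Fagin--Stockmeyer--Vardi (and Libkin) for the finite threshold form.

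You instead give a self-contained Ehrenfeucht--Fra\"iss\'e proof of the threshold version, with $r_i=3r_{i+1}+1$, so that $r=(3^s-1)/2$. You also handle the one step that the paper's definition forces on you. Hanf-equivalence is assumed only at the single radius $r$, but Duplicator needs counts at smaller radii. Since the rooted $\rho$-ball is read off from the rooted $r$-ball, each $N_\sigma$ for a $\rho$-type $\sigma$ is a sum of $N_\tau$ over the refining $r$-types. The relation ``equal, or both $\ge M$'' is preserved under such sums. Your near/far bookkeeping is also right. Near vertices and their $\rho$-balls lie inside the isomorphic $r_i$-neighbourhoods because $(2\rho+1)+\rho=r_i$. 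Hence near counts of each $\rho$-type agree, and far counts agree whenever the totals are equal and below $M$.

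What your route buys over the citation is explicit constants and a uniform treatment of finite and infinite graphs. This matters here. The paper applies the theorem to the finite random graphs, and also to a model $A$ of the axiom schemes in the proof of Theorem~\ref{thm:torus-core}; scheme~(ii) forces that model to be infinite. The paper's citation, by contrast, splits the attribution between Hanf's general form and the finite threshold form of Fagin--Stockmeyer--Vardi.

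One small arithmetic slip: the bound $|B(v,t)|\le\Delta^{t+1}$ holds only for $\Delta\ge2$. For $\Delta=1$ a ball can have two vertices, and for $\Delta=0$ the bound is $0$. Replacing $\Delta^{2r+2}$ by $(\Delta+1)^{2r+2}$ in the choice of $M$ fixes this. It is harmless for the paper's application, where $\Delta=2d\ge2$.
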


The general bounded-degree form goes back to Hanf \cite{Hanf}; the finite
form used for the random finite graphs below is due to Fagin, Stockmeyer, and
Vardi \cite{FaginStockmeyerVardi}.  See also \cite{Libkin}.  In the torus
arguments the maximum degree is $2d$.

\subsection{Large-component locality for forests}

The pruned Erd\H{o}s--R\'enyi graph is a forest with high probability,
but its degree is not bounded.  We therefore use a componentwise finite-rank
replacement for Hanf locality.

For finite graphs $A$ and $B$, write $A\equiv_s B$ if they satisfy the same
first-order sentences of quantifier rank at most $s$.  There are only
finitely many $\equiv_s$-classes of finite trees; we call each such class an
\emph{$s$-component type}.  A type is \emph{unbounded} if it contains trees
of arbitrarily large order, and \emph{bounded} otherwise.

\begin{lemma}[Semilinear tree spectra]\label{lem:tree-spectrum}
Let $\mathcal C$ be an $\mathrm{MSO}_2$-definable class of finite
(graph-theoretic) trees.  Then its order spectrum
\[
       S(\mathcal C)=\{|V(T)|:T\in\mathcal C\}
\]
is a semilinear subset of $\N$.

Consequently, if $\mathcal C_1,\ldots,\mathcal C_\nu$ are finitely many such
classes and each has members of arbitrarily large order, then there is a
constant $C\geq1$ such that, for every $i\leq\nu$ and every $m\geq1$, some
$T\in\mathcal C_i$ satisfies
\[
                 m\leq |V(T)|\leq Cm.
\]
\end{lemma}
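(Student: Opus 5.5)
We reduce to Parikh's theorem via tree automata. Because $\mathrm{MSO}_2$ on a tree can be interpreted in a rooted, ordered, bounded-rank encoding, an $\mathrm{MSO}_2$-definable class of unranked unrooted trees corresponds to a regular language of binary trees, and Parikh's theorem then makes its size spectrum semilinear. The second assertion is an arithmetic consequence of semilinearity.

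\textbf{Step 1 (encoding).} To each finite tree $T$ we associate the following structures. Choose a root and an ordering of children. Take the first-child/next-sibling binary encoding $\widehat T$, whose nodes are exactly $V(T)$. Each edge of $T$ is identified with its lower endpoint, which is a non-root vertex. Adjacency in $T$ is then MSO-definable in $\widehat T$: $u$ is the parent of $v$ iff $v$ is reached from the first child of $u$ by a chain of next-sibling steps, and such a chain is expressible in MSO as membership in every set closed under the sibling successor. Edge-set quantifiers become vertex-set quantifiers over non-root vertices. Hence for each $\mathrm{MSO}_2$ sentence $\varphi$ there is an MSO sentence $\widehat\varphi$ on labelled binary trees with $\widehat T\models\widehat\varphi$ iff $T\models\varphi$. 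This holds for every choice of root and ordering, since $\varphi$ does not see these choices. The set of encodings of trees in $\mathcal C$ is therefore $L=\{\widehat T:\widehat T\models\widehat\varphi\}$ intersected with the MSO-definable (indeed regular) set of well-formed encodings. By Thatcher--Wright / Doner, $L$ is a regular tree language.

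\textbf{Step 2 (Parikh).} A regular tree language is the set of derivation trees of a context-free grammar. The yield-size and node-count maps differ by a linear adjustment; alternatively, apply Parikh's theorem directly to the multiset of node labels. Either way the multiset of node counts is semilinear, and projecting onto the total node count gives that $S(\mathcal C)=\{|V(\widehat T)|:\widehat T\in L\}$ is a semilinear subset of $\N$. Equivalently, it is ultimately periodic.

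\textbf{Step 3 (consequence).} Suppose $S=S(\mathcal C_i)$ is infinite and semilinear. Then it contains an infinite arithmetic progression $\{a+bk:k\ge0\}$ with $b\ge1$. For $m\ge1$, take the least $k$ with $a+bk\ge m$; this gives $a+bk\le m+b+a\le(1+a+b)m$. So $C_i=1+a+b$ works for class $i$, and we take $C=\max_i C_i$.

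The main obstacle is Step 1: the interpretation must be faithful for $\mathrm{MSO}_2$ on unranked trees, in particular the edge-set quantification and the MSO-definability of the parent relation through sibling chains. We handle the edge sets by the lower-endpoint identification and the sibling chains by transitive closure expressed in MSO. The remaining steps are standard citations.
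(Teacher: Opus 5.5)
Your proof is correct and follows the paper's route (first-child/next-sibling encoding, the Thatcher--Wright/Doner theorem, Parikh's theorem, then an arithmetic progression inside the infinite semilinear spectrum); the only difference is that you identify each edge with its lower endpoint, so the encoding's nodes are exactly $V(T)$, whereas the paper encodes the two-sorted incidence tree and tracks $(|V(T)|,|E(T)|)$ with two terminals. One small caution for Step~2: your first option, a linear relation between yield length and node count, fails for derivation trees with unary nodes, which FCNS encodings have, so use your second option instead, which is precisely the paper's grammar $q\to x\,q_1\cdots q_t$ emitting one terminal per node.
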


\begin{proof}
View a graph-theoretic tree through its two-sorted incidence structure, with
unary labels distinguishing vertex-elements from edge-elements.  The
incidence structure is itself a finite labeled tree.  Choose a root, order
the children of each node, and apply the standard first-child/next-sibling
binary encoding.  The class of valid encodings whose decoded incidence tree
belongs to $\mathcal C$ is MSO-definable on finite labeled binary trees:
validity of the encoding, the two sorts, the incidence relation, and the
interpretation of the defining sentence for $\mathcal C$ are all MSO-definable.
By the MSO--tree-automaton theorem, this class is a regular finite-tree
language \cite{ThatcherWright,Doner,Courcelle}.

Let $\mathcal A$ be a finite bottom-up automaton recognizing this language.
Convert $\mathcal A$ into a context-free grammar as follows.  The automaton
states are nonterminals.  A transition with parent state $q$, child states
$q_1,\ldots,q_t$ ($0\leq t\leq2$), and a vertex label gives a production
\[
                 q\longrightarrow xq_1\cdots q_t,
\]
whereas a transition with an edge label gives the same production with $y$
in place of $x$.  A new start symbol $S$ has a production $S\to q$ for each accepting
state $q$.  The Parikh vectors of the generated words are exactly the pairs
$(|V(T)|,|E(T)|)$ arising from trees in $\mathcal C$.  Parikh's theorem
therefore makes this set semilinear, and projection onto the first coordinate
shows that $S(\mathcal C)$ is semilinear \cite{Parikh}.

An infinite semilinear subset of $\N$ contains an infinite arithmetic
progression $u+jh$ with $h>0$.  The least term of this progression that is at
least $m$ is at most $m+u+h$, and hence at most $(1+u+h)m$.  Taking the
maximum of these constants over the finitely many classes proves the final
assertion.
\end{proof}

\begin{lemma}\label{lem:pumping}
For every $s$ there is a constant $C_s$ such that every unbounded
$s$-component type $\gamma$ has the following property: for every $m\geq1$
there is a finite tree $T\in\gamma$ with
\[
       m\leq |T|\leq C_s m.
\]
\end{lemma}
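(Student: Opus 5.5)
The plan is to obtain Lemma~\ref{lem:pumping} as a direct instance of Lemma~\ref{lem:tree-spectrum}. The argument has three short steps.

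\emph{Step 1: each type is an MSO-definable class of trees.} Fix $s$. Each $s$-component type $\gamma$ is an $\equiv_s$-class of finite trees. Up to logical equivalence there are only finitely many first-order sentences of quantifier rank at most $s$. Hence $\gamma$ is defined, among finite trees, by the single first-order sentence
\[
   \chi_\gamma=\bigwedge_{\psi\in\mathrm{Th}_s(T_0)}\psi\;\wedge\bigwedge_{\psi\notin\mathrm{Th}_s(T_0)}\neg\psi ,
\]
where $T_0$ is any representative of $\gamma$ and $\mathrm{Th}_s(T_0)$ is its set of rank-$\leq s$ sentences. Being a tree is $\mathrm{MSO}_1$-expressible: the graph is connected (no proper nonempty vertex set closed under adjacency) and acyclic (no nonempty edge set in which every vertex has degree $0$ or at least $2$; this can be phrased in $\mathrm{MSO}_2$). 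Also, first-order sentences over the adjacency relation $E$ translate into $\mathrm{MSO}_2$ over the incidence structure. So $\mathcal C_\gamma=\{T \text{ finite tree}: T\models\chi_\gamma\}=\gamma$ is an $\mathrm{MSO}_2$-definable class of finite trees. Strictly, Lemma~\ref{lem:tree-spectrum} already presupposes that the class is a class of trees, so we may simply take the defining sentence to be $\chi_\gamma$ relativized to trees.

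\emph{Step 2: apply Lemma~\ref{lem:tree-spectrum} uniformly.} Let $\gamma_1,\ldots,\gamma_\nu$ list the finitely many unbounded $s$-component types. Each $\mathcal C_{\gamma_i}$ has members of arbitrarily large order, by the definition of unbounded. The ``consequently'' clause of Lemma~\ref{lem:tree-spectrum} then gives one constant $C\geq1$ such that, for every $i$ and every $m\geq1$, some $T\in\gamma_i$ has $m\leq|T|\leq Cm$. Set $C_s=C$. If there are no unbounded types, the statement is vacuous and we take $C_s=1$.

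\emph{Main obstacle.} The only point needing care is uniformity in $\gamma$. This is why we must invoke the finiteness of the set of $\equiv_s$-classes, rather than producing a separate constant for each type. The other point to check is that the defining sentence is legitimately $\mathrm{MSO}_2$ on the incidence encoding used in Lemma~\ref{lem:tree-spectrum}. This holds because adjacency $E(x,y)$ is expressible as ``there is an edge-element incident to both $x$ and $y$.''
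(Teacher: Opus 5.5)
Your proposal is correct and matches the paper's proof: each $s$-component type is defined among finite trees by its characteristic rank-$s$ first-order sentence, so it is $\mathrm{MSO}_2$-definable. Applying the uniform second assertion of Lemma~\ref{lem:tree-spectrum} to the finitely many unbounded types then gives a single constant $C_s$.
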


\begin{proof}
Each $s$-component type is defined, on finite trees, by a characteristic
first-order sentence and is therefore $\mathrm{MSO}_2$-definable.  Apply the
second assertion of Lemma~\ref{lem:tree-spectrum} to the finite family of
unbounded $s$-component types.
\end{proof}

\begin{lemma}\label{lem:forest-FV}
For every $s$ there is $M_s$ such that the following holds.  If, for each
$s$-component type $\gamma$, two forests $F,F'$ either have the same number
of components of type $\gamma$, this number being less than $M_s$, or both
have at least $M_s$ components of type $\gamma$, then $F\equiv_s F'$.
\end{lemma}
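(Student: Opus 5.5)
The plan is to prove Lemma~\ref{lem:forest-FV} with an Ehrenfeucht--Fra\"iss\'e game.  I would first make the hypothesis slightly stronger in one respect: the argument runs with component types of rank $s$ replaced by a finer pointed version.  For $j\leq s$ and a finite tree $T$ with a tuple $\bar a$ of at most $s-j$ distinguished vertices, let $\mathrm{tp}_j(T,\bar a)$ be its rank-$j$ type.  Only finitely many such types occur.  Since every tree in the statement is unpointed, its $s$-component type is exactly $\mathrm{tp}_s(T,\emptyset)$.  I then take $M_s=s$.  With some care a larger constant such as $M_s=s+1$ also works, and any finite bound suffices for the lemma.

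The winning condition for Duplicator, after $i$ rounds have been played in $F$ and $F'$, is as follows.  There must be a bijection between the components of $F$ containing pebbles and the components of $F'$ containing pebbles.  Matched components must satisfy $\mathrm{tp}_{s-i}(C,\bar a\cap C)=\mathrm{tp}_{s-i}(C',\bar a'\cap C')$, where the tuples are ordered consistently with the pebble indices.  Finally, for each unpointed type $\gamma$, the counts of unpebbled components of type $\gamma$ in $F$ and in $F'$ must either be equal or both be at least $s-i$.

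At round $0$ this invariant is exactly the hypothesis.  Now suppose Spoiler plays a vertex $x$ in a component $C$ of $F$.  There are two cases.
\begin{itemize}
\item If $C$ already contains pebbles, Duplicator answers inside the matched component $C'$.  She uses the Ehrenfeucht--Fra\"iss\'e characterization of $\equiv_{s-i}$ for $(C,\bar a)$ versus $(C',\bar a')$, which yields a reply preserving the types at rank $s-i-1$.
\item If $C$ is unpebbled, of type $\gamma$, the count condition guarantees that $F'$ also has an unpebbled component $C'$ of type $\gamma$.  Since $C\equiv_{s-i}C'$, Duplicator picks $x'\in C'$ realizing the same rank-$(s-i-1)$ type as $x$.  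The unpebbled counts of type $\gamma$ each drop by one, so they stay equal or both remain at least $s-i-1$.
\end{itemize}

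After $s$ rounds the invariant holds at rank $0$.  This means that within matched components the pebbled tuples satisfy the same atomic formulas.  Across different components there are no edges and the vertices are distinct, in both forests.  Equality between pebbles is also respected, because distinct components stay distinct under the matching.  So the map $\bar a\mapsto\bar a'$ is a partial isomorphism, and by the Ehrenfeucht--Fra\"iss\'e theorem $F\equiv_s F'$.

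The one point needing care is the threshold bookkeeping.  I must check that the claimed threshold degrades by exactly one per round, and that ``both at least $M_s$'' never fails to supply a fresh component when one is needed.  This holds as long as $M_s\geq s$, because at most $s$ components are ever pebbled.  Everything else is the standard Feferman--Vaught argument for disjoint unions, and I would cite \cite{Libkin} for the Ehrenfeucht--Fra\"iss\'e characterization.
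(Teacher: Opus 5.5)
Your proof is correct and is the same Ehrenfeucht--Fra\"iss\'e argument as the paper's. On first entry into a component, Duplicator answers in a fresh component of the same $s$-component type and then plays componentwise strategies inside matched components. Since there are no edges between components, the union of the componentwise partial isomorphisms is a partial isomorphism.

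Your explicit invariant, with pointed rank-$(s-i)$ types and decaying unpebbled counts, makes the paper's bookkeeping precise. Your threshold $M_s=s$ is slightly sharper than the paper's $M_s=s+1$. The paper's version follows at once, with no extra care needed, because a larger threshold only strengthens the hypothesis.
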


\begin{proof}
Take $M_s=s+1$.  In the $s$-move Ehrenfeucht--Fra\"isse game on $F$ and
$F'$, whenever Spoiler first enters a component, Duplicator chooses a fresh
component of the same $s$-component type.  At most $s$ components are entered,
so the multiplicity hypothesis always provides such a component.  On every
matched pair, Duplicator follows a winning $s$-move strategy for the two
component trees.  Since distinct components have no edges between them, the
union of the componentwise partial isomorphisms is a partial isomorphism of
the two forests.
\end{proof}

\begin{proposition}\label{prop:forest-profile}
Fix $s$.  There are constants $B_s,M_s$ and a finite set
$\Gamma_s^\infty$ of unbounded $s$-component types such that the following
holds.  If a finite forest $F$ has no component of order at most $B_s$ and has
at least $M_s$ components of each type in $\Gamma_s^\infty$, then the
$s$-theory of $F$ is fixed, independent of $F$.
\end{proposition}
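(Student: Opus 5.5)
The plan is to read the proposition off Lemma~\ref{lem:forest-FV} once the constants are chosen so that the hypothesis fixes the entire multiplicity profile up to the threshold $M_s$. There are only finitely many $s$-component types, and each is either bounded or unbounded.

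For each bounded type $\gamma$, the orders of its members are bounded by some $b_\gamma$. Set $B_s=\max_\gamma b_\gamma$ over the finitely many bounded types, with $B_s=0$ if there are none. Let $\Gamma_s^\infty$ be the set of all unbounded $s$-component types, and take $M_s=s+1$ as in Lemma~\ref{lem:forest-FV}.

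Now let $F$ satisfy the hypothesis. Every component of $F$ is a finite tree of order greater than $B_s$, so no component of $F$ has a bounded type. Hence the number of components of each bounded type is $0<M_s$. For each unbounded type, the number of components is at least $M_s$ by assumption.

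Given two forests $F,F'$ that both satisfy the hypothesis, they therefore agree type by type in the sense required by Lemma~\ref{lem:forest-FV}:
\begin{itemize}
\item for each bounded type, both have exactly $0$ components of that type;
\item for each unbounded type, both have at least $M_s$ components of that type.
\end{itemize}
Lemma~\ref{lem:forest-FV} then gives $F\equiv_s F'$, so the $s$-theory is independent of $F$.

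Nonemptiness is not needed for the statement, since it only asserts that all such $F$ share a theory. Still, it is easy to check that such an $F$ exists and that the fixed theory is well defined. By Lemma~\ref{lem:pumping}, each unbounded type has members of order greater than $B_s$. A disjoint union of $M_s$ copies of such a tree for each $\gamma\in\Gamma_s^\infty$ then satisfies the hypothesis.

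The only point needing care is that the $s$-component types are finite in number, so that $B_s$ is a finite maximum. This holds because, up to equivalence, there are finitely many first-order sentences of quantifier rank at most $s$ in the finite relational language of graphs, as the paper already notes. The rest is bookkeeping, and I expect no real obstacle.
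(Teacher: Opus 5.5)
Your proof is correct and follows the paper's argument: you take $\Gamma_s^\infty$ to be all unbounded types, $B_s$ a common order bound for the bounded types, and $M_s$ from Lemma~\ref{lem:forest-FV}, and you note that the hypothesis forces the same truncated multiplicity profile (zero for every bounded type, at least $M_s$ for every unbounded type), so Lemma~\ref{lem:forest-FV} applies. Your nonemptiness remark is not needed for the statement, and it follows directly from the definition of an unbounded type, without Lemma~\ref{lem:pumping}.
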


\begin{proof}
Let $\Gamma_s^\infty$ be the set of unbounded $s$-component types, and let
$M_s$ be given by Lemma~\ref{lem:forest-FV}.  Since there are only finitely
many $s$-component types, the bounded ones have a common size bound $B_s$;
if there are no bounded types, take $B_s=0$.  If $F$ has no component of size
at most $B_s$, then no bounded type occurs in $F$.  The unbounded types all
occur at least $M_s$ times by hypothesis.  Lemma~\ref{lem:forest-FV} now
implies that any two such forests are $\equiv_s$.
\end{proof}

\subsection{Counting local witnesses}

\begin{lemma}\label{lem:many}
Let $(A_i)_{i\in I_N}$ be events, each determined by finitely many
independent coin flips.  Suppose that $J_N\subseteq I_N$ is a subfamily for
which the events $(A_i)_{i\in J_N}$ are mutually independent.  If
\[
       |J_N|\min_{i\in J_N}\Prob(A_i)\longrightarrow\infty,
\]
then, for every fixed $m$, at least $m$ of the events $A_i$ occur with
probability tending to one.
\end{lemma}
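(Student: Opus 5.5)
The plan is to restrict attention to the independent subfamily and apply a standard second-moment or Chernoff-type bound to its count. Since occurrences of events indexed outside $J_N$ can only increase the number of occurring events, it suffices to show that
\[
  Z_N=\sum_{i\in J_N}\mathbf 1_{A_i}
\]
satisfies $\Prob(Z_N<m)\to0$ for each fixed $m$. Write $\mu_N=\Ex Z_N=\sum_{i\in J_N}\Prob(A_i)$. Then $\mu_N\geq|J_N|\min_{i\in J_N}\Prob(A_i)$, so $\mu_N\to\infty$ by hypothesis.

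By mutual independence of $(A_i)_{i\in J_N}$, the indicators are pairwise independent, so
\[
  \Var Z_N=\sum_{i\in J_N}\Prob(A_i)\bigl(1-\Prob(A_i)\bigr)\leq\mu_N .
\]
For $N$ large enough that $\mu_N\geq2m$, Chebyshev's inequality gives
\[
  \Prob(Z_N<m)\leq\Prob\bigl(|Z_N-\mu_N|\geq\mu_N/2\bigr)\leq\frac{4\Var Z_N}{\mu_N^2}\leq\frac{4}{\mu_N}\longrightarrow0 .
\]
Alternatively, one can use $\Prob(Z_N=0)=\prod_{i\in J_N}(1-\Prob(A_i))\leq e^{-\mu_N}$ together with a Chernoff lower-tail bound. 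Chebyshev is simpler, since pairwise independence already suffices.

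There is no real obstacle here. The only points to check are that "at least $m$ of the events $A_i$ occur" is monotone in the index set, which justifies discarding $I_N\setminus J_N$, and that the hypothesis controls $\mu_N$ from below. The assumption that each event is determined by finitely many coin flips is not needed for this lemma itself; it matters only when the lemma is applied, where one chooses $J_N$ as events depending on disjoint sets of coin flips, so that independence holds automatically.
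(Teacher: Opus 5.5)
Your proof is correct and follows essentially the paper's route: you discard the events outside $J_N$ and show that the occurrence count of the independent subfamily has a vanishing lower tail. The paper phrases this as stochastic domination by a binomial with mean $|J_N|\min_{i\in J_N}\Prob(A_i)\to\infty$ followed by a Chernoff bound; your direct Chebyshev estimate $\Prob(Z_N<m)\leq 4/\mu_N$ is equally valid and, as you note, needs only pairwise independence.
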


\begin{proof}
The number of occurrences in the independent subfamily stochastically
dominates a binomial random variable whose mean tends to infinity.  A
Chernoff bound gives the conclusion.
\end{proof}

\begin{lemma}\label{lem:poisson}
Let $\mathcal G$ be a fixed connected locally finite graph, and let a group
$\Gamma\leq\Aut(\mathcal G)$ act transitively on its vertices.  Let
$\pi_N:\mathcal G\to\mathcal G_N$ be finite covering quotients with
$|V(\mathcal G_N)|=N\to\infty$ and injectivity radius tending to infinity;
that is, for every fixed $R$, the quotient map restricts to a graph
isomorphism on every $R$-ball for all sufficiently large $N$.  On the edges
of $\mathcal G_N$,
independently put a rare state with probability $\rho_N$ and the common state
otherwise, where $\rho_N\to0$; write $\mathcal X_N$ for the resulting random
edge-state structure (or for the graph determined by those states).

Fix $k\geq1$ and finitely many $\Gamma$-orbits $D\in\mathcal D$ of bounded
marked configurations in $\mathcal G$.  Choose a common bounded protective
window containing all their decision windows.  A representative of $D$
consists of a specified $k$-element set of rare edges in this window; its
canonical occurrence requires those edges to be rare and every other edge in
the protective window to be common.  Descriptions with the same underlying
rare-edge set are grouped into a single orbit before counting, so distinct
members of $\mathcal D$ have distinct orbits of underlying rare-edge sets.
Assume that the number of candidate occurrences of $D$ in
$\mathcal G_N$ is
\[
             a_D N+o(N)
\]
for a constant $a_D>0$.  Let $W_{D,N}$ count these underlying occurrences,
whose prescribed rare/common states are realized, once per rare-edge set
rather than once per possible rooted observation.

If
\[
        N\rho_N^k\longrightarrow\lambda\in(0,\infty),
\]
then $(W_{D,N})_{D\in\mathcal D}$ converges jointly to a vector of independent
Poisson random variables with respective means $a_D\lambda$.

Suppose, in addition, that $\tau_1,\ldots,\tau_m$ are rooted local types such
that every realization of any $\tau_i$ either uses exactly $k$ rare edges and
is represented in $\mathcal D$, or uses at least $k+1$ rare edges in one of
finitely many bounded decision windows.  Then, for every fixed
truncation level $M$, the vector
\[
 \bigl(\min\{N_{\tau_i}(\mathcal X_N),M\}\bigr)_{1\leq i\leq m}
\]
has a limiting distribution.  The untruncated rooted-type counts need not be
Poisson: one underlying defect configuration may produce a bounded cluster of
rooted observations, so their limits are in general compound Poisson.
\end{lemma}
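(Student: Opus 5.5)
The plan is the method of factorial moments: prove Poisson convergence of the counts $W_{D,N}$, then deduce the statement about truncated rooted-type counts by writing them as functions of the $W_{D,N}$ plus an error that vanishes in probability.

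\textbf{Poisson step.} For each $D\in\mathcal D$ write $W_{D,N}=\sum_{\iota}I_\iota$. The sum runs over candidate occurrences $\iota$, that is, placements of the $k$-element rare-edge set of a representative of $D$, counted once per rare-edge set. $I_\iota$ is the indicator that these $k$ edges are rare and all other edges of the protective window are common. The window has a fixed number $w$ of edges, so
\[
\Prob(I_\iota=1)=\rho_N^k(1-\rho_N)^{w-k}=(1+o(1))\rho_N^k .
\]
Since there are $a_DN+o(N)$ candidates, $\Ex W_{D,N}\to a_D\lambda$. For joint convergence to independent Poisson laws it suffices to show that, for all fixed $r_D\ge0$,
\[
\Ex\prod_D (W_{D,N})_{r_D}\to\prod_D(a_D\lambda)^{r_D},
\]
where $(x)_r$ is the falling factorial. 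Expand the left side as a sum over ordered tuples of distinct candidates, $r_D$ of them of type $D$.

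First take tuples whose protective windows are pairwise far apart. Once $N$ is large enough that the injectivity radius exceeds twice the window diameter, these windows are disjoint isomorphic copies, so the indicators are independent. The number of such tuples is $(1+o(1))\prod_D(a_DN)^{r_D}$, which gives the main term.

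Next take tuples in which some windows overlap or touch. Group them into clusters of nearby candidates. A cluster with $j\ge2$ members has $O(N)$ placements, because the quotient is locally isomorphic to $\mathcal G$, which has bounded degree. The cluster can be realized only if the union $U$ of its rare-edge sets is entirely rare. If two members have different rare-edge sets, then $|U|\ge k+1$ and the cluster contributes $O(N\rho_N^{k+1})=O(\lambda\rho_N)\to0$. This is where the protective condition matters. If one member's rare-edge set lies inside another member's window but is not the same set, the second member requires those edges to be common, and the joint probability is $0$. If two members have the same rare-edge set, they are the same candidate, because descriptions are grouped by orbit of underlying rare-edge set and distinct members of $\mathcal D$ have distinct orbits. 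So all non-separated tuples contribute $o(1)$. The method of moments for Poisson limits then yields the joint Poisson convergence.

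\textbf{Rooted types.} Fix $M$ and define the error event $E_N$ that some bounded decision window, of the finitely many kinds allowed, contains at least $k+1$ rare edges. By a union bound over $O(N)$ windows,
\[
\Prob(E_N)=O(N\rho_N^{k+1})\to0 .
\]
Off $E_N$, every realization of a $\tau_i$ comes from an occurrence counted by some $W_{D,N}$. Conversely, each occurrence of $D$, once isolated from the others, produces a fixed finite number $c_{D,i}$ of rooted observations of type $\tau_i$, since its protective window fixes the local picture. By the moment estimate above, occurrences are pairwise far apart with high probability. Therefore, with high probability,
\[
N_{\tau_i}(\mathcal X_N)=\sum_D c_{D,i}W_{D,N}.
\]
The truncated vector is then a continuous function, on the discrete lattice, of $(W_{D,N})_D$ up to an event of vanishing probability. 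By the continuous mapping theorem it converges in law to the distribution of $\bigl(\min\{\sum_D c_{D,i}Z_D,M\}\bigr)_i$ with the $Z_D$ independent Poisson. This limit is compound Poisson, as remarked in the statement.

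\textbf{Main obstacle.} The delicate part is the overlap bookkeeping. One must check that grouping by underlying rare-edge sets rules out double counting, and that every overlapping configuration either forces at least $k+1$ rare edges or is contradictory. One must also justify that an isolated occurrence contributes a deterministic count $c_{D,i}$. This needs the decision window of each $\tau_i$ to lie inside the common protective window, so that edges outside it cannot change the rooted type at any observation point near the defect.
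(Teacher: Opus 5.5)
Your proposal is correct and follows essentially the same route as the paper. Both arguments use mixed factorial moments with the main term coming from tuples with disjoint windows, and both charge non-singleton overlap clusters $O(N\rho_N^{k+1})$ because two distinct $k$-sets span at least $k+1$ rare edges. Both then pass to an $O(N\rho_N^{k+1})$ exceptional event, off which the truncated rooted counts are a fixed function of the cluster counts (in your case explicitly the linear $\sum_D c_{D,i}W_{D,N}$), and finish with continuous mapping.

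The containment you flag at the end is not an extra hypothesis. As the paper does with ``additional rare edges in their decision windows,'' you can simply let $E_N$ also cover the bounded window formed by a cluster's protective window together with every observer window meeting it. That enlargement costs the same $O(N\rho_N^{k+1})$.
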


\begin{proof}
We use mixed factorial moments.  Fix nonnegative integers
$(j_D)_{D\in\mathcal D}$ and put $j=\sum_D j_D$.  Ordered candidate tuples
whose decision windows are pairwise disjoint number
\[
       (1+o(1))\prod_{D\in\mathcal D}(a_D N)^{j_D}.
\]
Their joint occurrence probability is
$(1+o(1))\rho_N^{k j}$, since only a fixed number of common-edge requirements
is involved.  Their contribution to the mixed factorial moment therefore
tends to
\[
       \prod_{D\in\mathcal D}(a_D\lambda)^{j_D}.
\]

It remains to discard overlapping tuples.  Join two proposed occurrences
when their decision windows intersect.  Because the local model and all
windows are fixed, a connected overlap component has only $O(N)$ possible
placements.  A non-singleton component contains two distinct underlying
$k$-edge sets and hence requires at least $k+1$ rare edges.  Its contribution
is therefore $O(N\rho_N^{k+1})=o(1)$; the other overlap components contribute
at most bounded factors because $N\rho_N^k=O(1)$.  Thus every mixed factorial
moment has the limit displayed above, proving the joint independent Poisson
limit.

Finally, the expected number of bounded windows containing a realization
with at least $k+1$ rare edges is $O(N\rho_N^{k+1})=o(1)$.  The same estimate
discards overlaps between distinct critical clusters and additional rare
edges in their decision windows.  Off this exceptional event, each rooted
type count, truncated at $M$, is a fixed function of the finite vector of
underlying cluster counts.  The continuous mapping theorem on the discrete
state space gives the asserted joint limiting distribution.
\end{proof}

\section{Pruning very sparse Erd\H{o}s--R\'enyi graphs}\label{sec:ER}

Throughout this section $p_n=c_n/n$, where $c_n>0$ and $c_n\to0$.
Let $f:\N\to\N$ satisfy $f(n)\to\infty$ and
\[
       f(n)\bigl(\log f(n)+\log(1/c_n)\bigr)=o(\log n).
\]
In particular, $\log(1/c_n)=o(\log n)$ and $f(n)=o(\log n)$.
Write $G_n=G(n,p_n)$ and $H_n=\Prune_{f(n)}(G_n)$.

\begin{lemma}\label{lem:ER-forest}
With probability tending to one, $G_n$ is acyclic.  Consequently, $H_n$ is a
forest with probability tending to one.
\end{lemma}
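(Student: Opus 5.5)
We argue by a first-moment bound on the number of cycles in $G_n$. For each $\ell\geq3$, the number of potential $\ell$-cycles in $K_n$ is $\frac{(n)_\ell}{2\ell}\leq n^\ell/(2\ell)$. Each of them is present in $G_n$ with probability $p_n^\ell=c_n^\ell/n^\ell$. Hence the expected number $Z_n$ of cycles of all lengths satisfies
\[
   \Ex Z_n\leq\sum_{\ell\geq3}\frac{n^\ell}{2\ell}\cdot\frac{c_n^\ell}{n^\ell}
   =\sum_{\ell\geq3}\frac{c_n^\ell}{2\ell}
   \leq\frac{c_n^3}{6(1-c_n)} .
\]
Since $c_n\to0$, we have $c_n<1/2$ eventually, and the right-hand side tends to $0$.

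Markov's inequality then gives $\Prob(Z_n\geq1)\leq\Ex Z_n\to0$, so $G_n$ is acyclic with probability tending to one.

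For the consequence, observe that $H_n=\Prune_{f(n)}(G_n)$ is an induced subgraph of $G_n$, namely a union of entire components. Any cycle in $H_n$ would therefore be a cycle in $G_n$. So on the event that $G_n$ is acyclic, $H_n$ is a forest; the empty graph counts as a forest if that case ever occurs.

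No step is a genuine obstacle. The only point needing care is that the sum runs over all cycle lengths up to $n$, not a fixed length. The geometric bound above handles this uniformly, because the factor $n^\ell$ from the count of cycles cancels exactly against $p_n^\ell$. Note that this argument uses only $c_n\to0$; the hypothesis on $f$ is not needed here.
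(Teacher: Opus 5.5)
Your proposal is correct and follows the paper's own argument: a first-moment bound $\sum_{\ell\geq3}c_n^\ell/(2\ell)=o(1)$ on the expected number of cycles, followed by Markov's inequality. Your explicit geometric bound, and your observation that $H_n$ is a union of components of $G_n$, spell out details the paper leaves implicit.
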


\begin{proof}
For all sufficiently large $n$, we have $c_n<1$.  The expected number of
cycles in $G_n$ is at most
\[
   \sum_{\ell\geq3} \frac{(n p_n)^\ell}{2\ell}
     =\sum_{\ell\geq3}\frac{c_n^\ell}{2\ell}=o(1).
\]
The result follows from Markov's inequality.
\end{proof}

\begin{proposition}\label{prop:ER-components}
Let $(F_n)$ be any sequence of finite trees with $t_n=|F_n|$ such that
\[
       t_n\bigl(\log t_n+\log(1/c_n)\bigr)=o(\log n).
\]
Let $X_n(F_n)$ be the number of connected components of $G_n$ isomorphic
to $F_n$.  Then
\[
       X_n(F_n)\longrightarrow\infty
       \quad\text{in probability.}
\]
\end{proposition}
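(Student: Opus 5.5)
We will use the second moment method on $X_n=X_n(F_n)$: show $\Ex X_n\to\infty$ and $\Var X_n=o((\Ex X_n)^2)$. Chebyshev's inequality then gives $X_n\to\infty$ in probability. Write $t=t_n$ and $a=|\Aut(F_n)|$.

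\emph{First moment.} The number of copies of $F_n$ in $K_n$ on a given vertex set of size $t$ is $t!/a$. Hence
\[
\Ex X_n=\binom{n}{t}\frac{t!}{a}\,p_n^{t-1}(1-p_n)^{\binom t2-(t-1)+t(n-t)}.
\]
Since $t=o(\log n)$, we have $\binom nt t!=(n)_t=n^t(1+o(1))$. The isolation factor is $\exp(-(1+o(1))c_n t)=e^{-o(t)}$, which is at least $e^{-t}$. This gives
\[
\Ex X_n\ge(1+o(1))\,\frac{n\,c_n^{t-1}e^{-t}}{a}.
\]
Using $a\le t!\le t^t$, we get
\[
\log \Ex X_n\ge \log n - t\log t-(t-1)\log(1/c_n)-t-O(1).
\]
By hypothesis $t(\log t+\log(1/c_n))=o(\log n)$. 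Also $t=o(\log n)$; if $t$ is bounded this still holds, since then $\log(1/c_n)=o(\log n)$. Therefore $\log\Ex X_n=(1-o(1))\log n\to\infty$.

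\emph{Second moment.} Write $X_n=\sum_S I_S$ over pairs $S$ consisting of a $t$-set together with a copy of $F_n$ on it, where $I_S$ indicates that this copy is a component. Two distinct such events with intersecting vertex sets are incompatible: a component is determined by any one of its vertices, and equal vertex sets with different edge sets are impossible. So only disjoint pairs contribute to $\Ex X_n(X_n-1)$. For disjoint $S,S'$, the joint probability equals $\Prob(I_S)\Prob(I_{S'})(1-p_n)^{-t^2}$, because the $t^2$ non-edges between the two sets are counted twice in the product. The number of ordered disjoint pairs is at most the square of the number of single placements. Hence
\[
\Ex X_n(X_n-1)\le(\Ex X_n)^2(1-p_n)^{-t^2}=(\Ex X_n)^2(1+O(t^2c_n/n)),
\]
and the last factor is $1+o(1)$. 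It follows that $\Var X_n\le \Ex X_n+o((\Ex X_n)^2)=o((\Ex X_n)^2)$, because $\Ex X_n\to\infty$.

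The main point needing care is the first moment lower bound. There we must control $|\Aut(F_n)|$ uniformly in the possibly growing tree $F_n$, and this is exactly where the $t\log t$ term in the hypothesis is used. The crude bound $a\le t!$ suffices, and the trivial bound $t=o(\log n)$ absorbs the $e^{-t}$ and $(1+o(1))$ errors. The second moment step is routine once we note that overlapping component events are mutually exclusive.
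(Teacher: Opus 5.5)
Your proposal is correct and follows the paper's proof essentially step for step. You bound the first moment from below via $|\Aut(F_n)|\le t_n!$ and the hypothesis, then run a Chebyshev argument using that overlapping component events are incompatible and that disjoint ones differ from independence only by the factor $(1-p_n)^{-t_n^2}=1+o(1)$. The differences are cosmetic: you sum over labeled copies rather than $t_n$-subsets, and you use only the upper bound $\Ex X_n(X_n-1)\le(1+o(1))(\Ex X_n)^2$, which is all Chebyshev needs.
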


\begin{proof}
Since $\log(1/c_n)\to\infty$, the hypothesis implies $t_n=o(\log n)$,
and in particular $t_n=o(n^{1/2})$.  Let $a_n=|\Aut(F_n)|$.  For a fixed
$t_n$-set of vertices, the number of labeled graphs on that set isomorphic
to $F_n$ is $t_n!/a_n$.  Hence
\[
\Ex X_n(F_n)
 =\binom{n}{t_n}\frac{t_n!}{a_n}
   p_n^{t_n-1}(1-p_n)^{t_n(n-t_n)+\binom{t_n}{2}-(t_n-1)}.
\]
Put
\[
       A_n=t_n(n-t_n)+\binom{t_n}{2}-(t_n-1).
\]
Then
\[
 \log\frac{(n)_{t_n}}{n^{t_n}}=O(t_n^2/n)=o(1),
 \qquad
 A_n\log(1-p_n)=-c_n t_n+o(1).
\]
Using $a_n\leq t_n!$ and $c_n t_n=o(\log n)$ therefore gives
\[
\begin{aligned}
\log \Ex X_n(F_n)
&\geq \log n-(t_n-1)\log(1/c_n)-\log(t_n!)-O(c_n t_n)-o(1) \\
&\geq \log n-t_n\bigl(\log(1/c_n)+\log t_n\bigr)-o(\log n) \\
&=(1-o(1))\log n.
\end{aligned}
\]
Thus $\Ex X_n(F_n)\to\infty$.

For completeness, write $X_n(F_n)=\sum_S I_S$, where the sum is over
$t_n$-subsets and $I_S$ is the indicator that $S$ induces a component
isomorphic to $F_n$.  Two distinct intersecting sets cannot both be connected
components.  For two disjoint sets, the joint probability differs from the
product of the individual probabilities only because the absence of the
edges between the two sets is counted twice in the product; the correction
factor is
\[
       (1-p_n)^{-t_n^2}=1+o(1).
\]
Moreover, the proportion of ordered pairs of $t_n$-sets which are disjoint is
\[
        \frac{\binom{n-t_n}{t_n}}{\binom n{t_n}}=1+o(1),
\]
because $t_n^2/n=o(1)$.  It follows that
\[
        \Ex\bigl(X_n(F_n)\bigr)_2
          =(1+o(1))\bigl(\Ex X_n(F_n)\bigr)^2,
\]
and hence
\[
       \Var(X_n(F_n))=o\bigl((\Ex X_n(F_n))^2\bigr)+O(\Ex X_n(F_n)),
\]
and Chebyshev's inequality gives $X_n(F_n)/\Ex X_n(F_n)\to1$ in probability.
\end{proof}

\begin{proof}[Proof of Theorem~\ref{thm:ER-core}]
Fix a first-order sentence $\varphi$ and let $s=\qr(\varphi)$.  Take
$B_s,M_s$, and $\Gamma_s^\infty$ from
Proposition~\ref{prop:forest-profile}, and take $C_s$ from
Lemma~\ref{lem:pumping}.  For every $\gamma\in\Gamma_s^\infty$, that lemma
gives a finite tree $F_{\gamma,n}\in\gamma$ with
\[
       f(n)\leq |F_{\gamma,n}|\leq C_s f(n).
\]
Since $|F_{\gamma,n}|\leq C_s f(n)$, the hypothesis on $f$ implies
\[
\begin{aligned}
 |F_{\gamma,n}|\bigl(\log |F_{\gamma,n}|+\log(1/c_n)\bigr)
 &\leq C_s f(n)\bigl(\log(C_s f(n))+\log(1/c_n)\bigr)\\
 &=o(\log n).
\end{aligned}
\]
By Proposition~\ref{prop:ER-components}, $G_n$ has at least $M_s$ connected
components isomorphic to $F_{\gamma,n}$ with probability tending to one.
Since $\Gamma_s^\infty$ is finite, these events hold simultaneously for all
$\gamma\in\Gamma_s^\infty$ with probability tending to one.  Since
$|F_{\gamma,n}|\geq f(n)$, all of these components survive in $H_n$.

Every component of $H_n$ has order at least $f(n)$, which exceeds $B_s$
for all sufficiently large $n$.  By Lemma~\ref{lem:ER-forest}, $H_n$ is also
a forest with probability tending to one.  Hence $H_n$ satisfies the
hypotheses of Proposition~\ref{prop:forest-profile} with probability tending
to one.  The truth value of every sentence of rank at most $s$, and in
particular of $\varphi$, is then forced.  Since $\varphi$ was arbitrary,
$(H_n)$ has a first-order zero--one law.

For the more precise multiplicity assertion, fix $K$ and repeat the same
argument with $K$ in place of $M_s$ in the application of
Proposition~\ref{prop:ER-components}.  The exclusion of bounded types follows
from $f(n)>B_s$, and the forest assertion follows from
Lemma~\ref{lem:ER-forest}.
\end{proof}

\begin{proposition}\label{prop:star-obstruction}
There are sequences $c_n\to0$ and $f(n)\to\infty$ satisfying
\[
       \log(1/c_n)=o(\log n),
       \qquad
       f(n)\log(1/c_n)=o(\log n),
\]
for which the pruned graphs $\Prune_{f(n)}(G(n,c_n/n))$ do not have a first-order zero--one law.
\end{proposition}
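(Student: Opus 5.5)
Fix the pruning threshold so that star components of order exactly $f(n)$ sit at a Poisson threshold, while their presence is first-order visible. Concretely, pick $c_n$ with $\log(1/c_n)\to\infty$ slowly, say $c_n=1/\log\log n$, and choose $t=f(n)$ as the integer for which
\[
  \mu_n:=n\frac{c_n^{t-1}}{(t-1)!}e^{-c_n t}
\]
is of constant order. Since the ratio of consecutive terms in $t$ is $c_n/t\cdot e^{-c_n}\to0$, we cannot hit a fixed constant exactly. It suffices, however, to arrange that $\mu_n$ does not tend to $0$ or $\infty$ along the whole sequence. A simple way is the following. Let $f(n)$ be the least $t$ with $\mu_n(t)\le 1$. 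Then $\mu_n(f(n))\in(c_n e^{-c_n}/f(n),1]$, which may tend to $0$. To avoid this, I would instead tune $c_n$ continuously in $n$ (for fixed $t$, $\mu_n$ is continuous and increasing in $c_n$ for small $c_n$), choosing $c_n\in[1/(2\log\log n),1/\log\log n]$ so that $\mu_n(f(n))=1$ exactly, with $f(n)\approx \log n/\log(1/c_n)$ up to lower order. Then $f(n)\log(1/c_n)=(1+o(1))\log n$, which is not $o(\log n)$. So this naive choice violates the hypothesis, and the main obstacle is reconciling $f\log(1/c_n)=o(\log n)$ with $\mu_n\asymp1$.

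The resolution is that $\log((t-1)!)\approx t\log t$ must carry the bulk. So take $t\log t\sim\log n$, with $t=f(n)\sim\log n/\log\log n$, and let $\log(1/c_n)\to\infty$ much more slowly, e.g. $c_n=1/\log\log\log n$. Then $f(n)\log(1/c_n)=o(\log n)$ while $f\log f\sim\log n$, matching the remark in the introduction. Fix the integer $t=f(n)$ first (roughly the least integer with $n/(t-1)!<1$), then solve for $c_n$ in a window $[\tfrac12\epsilon_n,2\epsilon_n]$ with $\epsilon_n=1/\log\log\log n$ so that $\mu_n=1$. This is possible by continuity, because scaling $c_n$ by a factor $2$ changes $\mu_n$ by the factor $2^{t-1}$, which sweeps past any ratio between consecutive factorial levels (ratio $\approx t\ll 2^{t-1}$). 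Check also $c_n\to0$ and $\log(1/c_n)=o(\log n)$.

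Next, let $Z_n$ count star components $K_{1,t-1}$ in $G_n$. By the first-moment computation in the proof of Proposition~\ref{prop:ER-components} (with $|\Aut|=(t-1)!$ and $t=o(n^{1/2})$), $\Ex Z_n=(1+o(1))\mu_n\to1$. Computing factorial moments as in that proof shows $\Ex(Z_n)_j\to1$ for each fixed $j$: the disjoint-set correction factor is $(1-p_n)^{-O(t^2)}=1+o(1)$, and intersecting sets contribute nothing. Hence $Z_n\to\Po(1)$, so $\Prob(Z_n\ge1)\to1-e^{-1}\in(0,1)$. All such stars have order exactly $f(n)$ and survive pruning. Finally, take the sentence $\varphi:\exists x\,[\exists y\,E(x,y)\wedge\forall y(E(x,y)\to\forall z(E(y,z)\to z=x))]$, which says that some non-isolated vertex has all neighbours of degree one, i.e.\ that a star component with at least two vertices exists. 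In the forest $H_n$ (Lemma~\ref{lem:ER-forest}), this is exactly $Z_n\ge1$, since any surviving star component has order $\ge f(n)$. To exclude stars of order $t'>f(n)$, note that $\mu_n(t')\le\mu_n(t)\cdot(c_n/t)^{t'-t}$, so their total expectation is $o(1)$. Thus $\Prob(H_n\models\varphi)\to1-e^{-1}$, and the zero--one law fails. The delicate step I expect is the continuity and window argument making $\mu_n$ exactly (or asymptotically) constant while keeping $c_n\to0$ monotonically enough.
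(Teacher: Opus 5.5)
Your proposal uses the same construction as the paper. Star components $K_{1,t-1}$ sit at a Poisson threshold with $f(n)\log f(n)\sim\log n$ and $\log(1/c_n)$ growing very slowly. The same first-order sentence detects a star component, factorial moments give the $\Po$ limit, and a geometric bound disposes of larger stars. For that last bound, use the exact ratio $\mu_{n,t+1}/\mu_{n,t}=\frac{(n-t)p_n}{t}(1-p_n)^{n-t-2}$, as the paper does. Your asymptotic formula for $\mu_n$ is only valid for $t'=o(n^{1/2})$, and the tail runs up to $t'=n$.

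The step that fails as written is the choice of $t$. Suppose $t$ is the least integer with $n/(t-1)!<1$. Then
\[
  \mu_n(t,c)=\frac{n}{(t-1)!}\,c^{t-1}e^{-ct}<1
  \qquad\text{for every } c\in(0,1],
\]
so $\mu_n=1$ has no solution at all. In fact, for $c\le 2\epsilon_n$ one has $\log\mu_n\le-(t-1)\log(1/(2\epsilon_n))\to-\infty$. Moving $c$ across your factor-$4$ window shifts $\log\mu_n$ by at most $(t-1)\log 4$. So the factor $c^{t-1}$ is not a lower-order correction to the ``factorial level''; it costs $\log(1/c)$ per unit of $t$.

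The repair is to let $t$ be the least integer with $\mu_n(t,\epsilon_n)\le 1$.
\begin{itemize}
\item One has $\mu_n(t,\epsilon_n)/\mu_n(t-1,\epsilon_n)=\epsilon_n e^{-\epsilon_n}/(t-1)$, so consecutive levels differ by a factor of about $t/\epsilon_n$, not $t$. Hence $\mu_n(t,\epsilon_n)>\epsilon_n e^{-\epsilon_n}/(t-1)$.
\item Then $\mu_n(t,2\epsilon_n)=2^{t-1}e^{-\epsilon_n t}\mu_n(t,\epsilon_n)>1$ for large $n$.
\item The intermediate value theorem gives $c_n\in[\epsilon_n,2\epsilon_n]$ with $\mu_n(t,c_n)=1$.
\end{itemize}
This $t$ still satisfies $t\sim\log n/\log\log n$. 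All the required conditions then hold, with $\log(1/c_n)=\log\log\log\log n+O(1)$.

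The paper avoids this bookkeeping by fixing $f(n)=\lfloor \log n/\log\log n\rfloor$ first. It then solves for $c_n\in(0,e^{-1})$ directly, using monotonicity in $c_n$, and reads off $\log(1/c_n)=\log\log\log n+O(1)$ from Stirling. That route also gives an arbitrary limit $\lambda$ in place of $1$. With the corrected choice of $t$, the rest of your argument goes through as written.
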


\begin{proof}
Fix $\lambda\in(0,\infty)$ and set
\[
       f(n)=\left\lfloor \frac{\log n}{\log\log n}\right\rfloor.
\]
Choose $c_n\in(0,1)$ so that
\begin{equation}\label{eq:choose-c-star}
       \frac{n c_n^{f(n)-1}e^{-c_n f(n)}}{(f(n)-1)!}=\lambda.
\end{equation}
For all sufficiently large $n$ such a solution exists in the range
$c_n\in(0,e^{-1})$.  Indeed, the left hand side tends to $0$ as $c_n\downarrow0$,
while at $c_n=e^{-1}$ its logarithm is
$\log n-f(n)\log f(n)+O(f(n))\to\infty$.  It is unique in this interval because
the logarithmic derivative of its $c_n$-dependent factor is
\[
        \frac{f(n)-1}{c_n}-f(n)>0
        \qquad (0<c_n<e^{-1})
\]
for all sufficiently large $n$.  Writing
$a_n=\log(1/c_n)$, Stirling's formula applied to
\eqref{eq:choose-c-star} gives
\[
\begin{aligned}
 a_n
 &=\frac{\log n-\log((f(n)-1)!)-c_n f(n)-\log\lambda}{f(n)-1}\\
 &=\log\log\log n+O(1).
\end{aligned}
\]
Hence $c_n\to0$, $\log(1/c_n)=o(\log n)$, and
$f(n)\log(1/c_n)=o(\log n)$.  Notice also that
$f(n)\log f(n)=(1+o(1))\log n$, so the stronger hypothesis of
Theorem~\ref{thm:ER-mso} is deliberately violated.

Let $Z_n$ be the number of connected components of $G(n,c_n/n)$ that are
stars with at least $f(n)$ vertices, and let $\mu_{n,t}$ be the expected number
with exactly $t$ vertices.  For $t\geq3$, direct counting gives
\[
 \mu_{n,t}=\binom nt t\,p_n^{t-1}
 (1-p_n)^{t(n-t)+\binom t2-(t-1)}.
\]
Since $f(n)=o(n^{1/2})$, uniformly for $t=O(f(n))$,
\[
       \mu_{n,t}=(1+o(1))
       \frac{n c_n^{t-1}e^{-c_n t}}{(t-1)!}.
\]
By \eqref{eq:choose-c-star}, $\mu_{n,f(n)}\to\lambda$.  Moreover, for
$f(n)\leq t<n$,
\[
 \frac{\mu_{n,t+1}}{\mu_{n,t}}
 =\frac{(n-t)p_n}{t}(1-p_n)^{n-t-2}
 \leq \frac{c_n}{t(1-p_n)}=o(1)
\]
uniformly in $t$.  Hence the expected number of star components with more
than $f(n)$ vertices is $o(1)$, and $\Ex Z_n\to\lambda$.

Let $Y_n$ count star components with exactly $f(n)$ vertices.  Write it
as a sum over centered candidates $(S,x)$, where $|S|=f(n)$ and $x\in S$ is
the proposed center.  For every fixed $j$, only candidates with pairwise
disjoint vertex sets contribute to the $j$th factorial moment of $Y_n$.
Their joint probability differs from the product of the individual
probabilities by the factor
\[
 (1-p_n)^{-\binom j2 f(n)^2}=1+o(1),
\]
and the use of disjoint vertex sets changes the corresponding combinatorial
factor by $1+o(1)$ because $f(n)=o(n^{1/2})$.  Thus
\[
        \Ex (Y_n)_j\longrightarrow\lambda^j.
\]
The factorial-moment criterion gives $Y_n\Rightarrow\Po(\lambda)$, while
$Z_n-Y_n\to0$ in probability by the preceding tail estimate.  Therefore
\[
       Z_n\Rightarrow \Po(\lambda).
\]

Let $\psi_\star$ be the first-order sentence
\[
 \exists x\Bigl[(\exists y\,E(x,y))\wedge
 \forall y\bigl(E(x,y)\to \forall z(E(y,z)\to z=x)\bigr)\Bigr].
\]
With high probability, $G(n,c_n/n)$ is a forest.  On this event, the
pruned graph satisfies $\psi_\star$ exactly when at least one star component
with at least $f(n)$ vertices survives.  Therefore
\[
       \Prob\bigl(\Prune_{f(n)}(G(n,c_n/n))\models\psi_\star\bigr)
       \longrightarrow 1-e^{-\lambda},
\]
which is strictly between $0$ and $1$.
\end{proof}

\begin{remark}\label{rem:star}
The strengthened smallness condition is not cosmetic.  Let $S_t$ be the
star on $t$ vertices.  When $t=t(n)\to\infty$ and $t=o(n^{1/2})$, the expected
number of components isomorphic to $S_t$ in $G(n,c_n/n)$ is asymptotic to
\[
       n\frac{c_n^{t-1}}{(t-1)!}e^{-c_n t}.
\]
After pruning, the first-order sentence asserting the existence of a vertex
all of whose neighbors are leaves is controlled by star components of order
at least $f(n)$.  The weaker condition
$f(n)\log(1/c_n)=o(\log n)$ permits choices of $c_n$ and $f(n)$ for which
the displayed expectation at $t=f(n)$ has a positive finite limit or
oscillates.  The corresponding sentence then need not have a $0$--$1$ limit.
The additional condition $f(n)\log f(n)=o(\log n)$ excludes this
automorphism-driven obstruction.
\end{remark}

\section{Percolated tori}\label{sec:torus}

Fix $d\geq1$ and write $N=L^d$.  For every fixed coordinate box, the
quotient map $\Z^d\to T_L^d$ is injective on the box and its incident edges
once $L$ is sufficiently large.

A rooted $r$-ball type $\tau$ is a \emph{$d$-lattice type} if
$B_X(0,r)\cong\tau$ for some subgraph $X\subseteq\Z^d$.  It is
\emph{unbounded} if such a realization can be chosen with the root in an
infinite component of $X$.  This is stronger than merely requiring the root
component in the displayed ball to reach distance $r$.

\subsection{The pruned middle regime}

Throughout this subsection $f:\N\to\N$ is integer-valued.

Let
\[
       X_L=X_L^d(q_N),\qquad Y_L=\Prune_{f(N)}(X_L).
\]
Recall that
\[
       \eta_N=\min\{q_N,p_N\},\qquad p_N=1-q_N.
\]

\begin{lemma}\label{lem:torus-witness}
Assume $f(N)\to\infty$ and
\[
       f(N)\log(1/\eta_N)=o(\log N).
\]
Fix $r\geq0$, an unbounded $d$-lattice rooted $r$-ball type $\tau$, and
$m\geq1$.  With probability tending to one, $Y_L$ has at least $m$ vertices
whose rooted $r$-ball is $\tau$.
\end{lemma}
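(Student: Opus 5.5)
Since $\tau$ is unbounded, fix a subgraph $X\subseteq\Z^d$ and a vertex $0$ in an infinite component of $X$ with $B_X(0,r)\cong\tau$. Because $\tau$ must survive pruning at the growing threshold $f(N)$, a bounded local pattern is not enough: we also need a connected piece of size at least $f(N)$ attached to the root, and it must be chosen so that it does not alter the $r$-ball. The plan is to build, for each $N$, a finite \emph{certificate pattern} $P_N$ of edge states on a box of side $\ell_N=O(f(N))$ in $\Z^d$ with four properties. First, it reproduces $B_X(0,r)$ exactly at the centre. Second, the root's component inside the box contains at least $f(N)$ vertices. Third, every edge on the boundary of the box is closed, so the configuration inside the box determines the root's component. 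Fourth, the pattern involves $O(f(N))$ prescribed edge states in total.

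To construct $P_N$, I will use the infinite component of $0$ in $X$. Take a self-avoiding path $\gamma$ in $X$ from $0$ leaving the ball of radius $R=r+f(N)$; it contains at least $f(N)$ vertices. Prescribe states only on the following edges: all edges of $\Z^d$ inside $B_{\Z^d}(0,r+1)$, set according to $X$; the edges of $\gamma$, set open; and every other edge incident to a vertex of $\gamma$ outside $B(0,r)$, set closed. The third group cuts $\gamma$ off from the rest of the lattice. Any edge not prescribed is also declared closed, within a box of side $O(r+f(N))$ containing these sets. The delicate point is that closing edges near the ball could in principle change $B(0,r)$. It does not, because the ball's induced graph depends only on edges among vertices at distance at most $r$ (computed within the configuration), and these are all prescribed. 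I expect some care here: the distance-$r$ set is computed in the random graph, but every relevant edge lies within lattice distance $r+1$, and those edges are fixed to agree with $X$. The whole pattern is a box of $O(f(N)^d)$ edges, which gives too many prescribed states. So rather than fixing the whole box, I will prescribe only the $O(r^d+f(N))$ edges described above (a bounded number plus $O(d\,f(N))$ along and around $\gamma$). With the boundary of $\gamma\cup B(0,r+1)$ closed, the root's component is exactly determined, has size between $f(N)$ and $O(r^d+f(N))$, and its $r$-ball is $\tau$.

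The pattern then has probability at least $\eta_N^{K f(N)}$ for a constant $K=K(d,r,X)$. By the hypothesis $f(N)\log(1/\eta_N)=o(\log N)$, this probability is $N^{-o(1)}$. Now place translates of the pattern in disjoint boxes of side $\ell_N=O(f(N))=o(L)$ tiling the torus. There are $\gtrsim N/f(N)^d=N^{1-o(1)}$ of them, since $f(N)=o(\log N)$. The events are independent because they involve disjoint edge sets, and the quotient map is injective on each box once $L\gg \ell_N$. So $|J_N|\min\Prob(A_i)=N^{1-o(1)}\to\infty$, and Lemma~\ref{lem:many} gives at least $m$ occurrences with high probability. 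In each occurrence the root's component in $X_L$ has order at least $f(N)$, so it survives in $Y_L$. Pruning deletes whole components only, so the root's $r$-ball in $Y_L$ equals its $r$-ball in $X_L$, which is $\tau$.

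The main obstacle is the bookkeeping in the second step. We must ensure that the closed edges around $\gamma$ never conflict with the prescribed states in $B(0,r+1)$, which needs the path to stay consistent with $X$ there since it is a path in $X$. We must also verify that the exposed-edge count is linear in $f(N)$ with a constant independent of $N$. I would handle the first point by taking $\gamma$ to be a path in $X$ and letting the states inside $B(0,r+1)$ be exactly those of $X$.
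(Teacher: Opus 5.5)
Your strategy matches the paper's. You fix the window states around the root as in $X$, retain a path from the root carrying at least $f(N)$ vertices, and bound the pattern probability by $\eta_N^{O(f(N))}=N^{-o(1)}$. You then pack $N^{1-o(1)}$ translates with disjoint edge sets, apply Lemma~\ref{lem:many}, and note that pruning removes whole components. However, the step you single out as the main obstacle does fail as written.

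You choose $\gamma$ as a path in $X$ from $0$ that \emph{leaves the ball of radius $r+f(N)$}, and you assert that it costs $O(f(N))$ prescribed edges. The length of such a path is a property of the fixed graph $X$, and it need not be linear in the distance reached. For example, suppose that outside the window the infinite component of $X$ winds as a square spiral in $\Z^2$. Then every path in $X$ from $0$ to $\ell^1$-distance $R$ has length of order $R^2$, and in general the length can be of order $R^d$. Your pattern then has order $f(N)^2$ prescribed states. Take $\eta_N=1/2$ and $f(N)=(\log N)^{2/3}$: the hypothesis holds, yet $N^{1-o(1)}\cdot 2^{-cf(N)^2}\to0$, so Lemma~\ref{lem:many} yields nothing.

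The repair is what the paper does: take the initial segment of the ray with $f(N)$ vertices. The lemma only needs the root's component to have \emph{at least} $f(N)$ vertices, so reaching distance $r+f(N)$ is irrelevant. The segment has $f(N)-1$ edges and coordinate diameter at most $f(N)$, and it is automatically consistent with the window states because $\gamma\subseteq X$. Alternatively, choose $X$ whose ray is eventually a straight coordinate ray, as in the proof of Lemma~\ref{lem:torus-no-bounded}.

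Your third group of closed edges is unnecessary, and it causes the other bookkeeping trouble you anticipate. The event that the root's component has at least $f(N)$ vertices is increasing in the retained edges. Retaining the path therefore suffices, and the paper prescribes nothing further.

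As written, the closures can clash with states copied from $X$ inside $B(0,r+1)$. Also, once you stop closing the whole box, the claim that the root's component is ``exactly determined'' is false. Edges leaving $B(0,r+1)$ away from $\gamma$ remain unprescribed. Neither issue damages the lemma, and deleting the third group removes both.
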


\begin{proof}
Choose a subgraph $X\subseteq\Z^d$ such that $B_X(0,r)$ has type $\tau$ and
$0$ lies in an infinite component.  That locally finite component contains a
self-avoiding ray $\gamma$ starting at $0$.  There is a fixed finite set of
lattice edges $D_\tau$, contained in a box depending only on $r$, $d$, and
$\tau$, whose states determine the rooted
$r$-ball.  For example, one may take all lattice edges with both endpoints in
$[-r-1,r+1]^d$.  Prescribe on $D_\tau$ the same states as in $X$.  This includes every boundary decision
needed to prevent an additional vertex from entering the $r$-ball.

Now take an initial segment of $\gamma$ containing at least $f(N)$ distinct
vertices and prescribe all of its edges to be retained.  These prescriptions
are consistent, since $\gamma\subseteq X$.  They force the root component to
have at least $f(N)$ vertices while leaving its rooted $r$-ball equal to
$\tau$.  The number of queried edges is at most
$C_{r,d,\tau}(f(N)+1)$, and their coordinate diameter is
$O_{r,d,\tau}(f(N))$.  Because $\eta_N\leq1/2$, the hypothesis implies
$f(N)=o(\log N)$ and therefore $f(N)=o(L)$.  Thus the witness embeds without
wrap-around in $T_L^d$ for all sufficiently large $L$.

A specified translate of this witness occurs with probability at least
\[
       \eta_N^{C_{r,d,\tau}(f(N)+1)}
       =\exp\{-C_{r,d,\tau}(f(N)+1)\log(1/\eta_N)\}
       =N^{-o(1)}.
\]
The hypothesis also gives $\log(1/\eta_N)=o(\log N)$, so the additive
$1$ in the exponent is harmless.  Moreover,
$f(N)=o(\log N)=N^{o(1)}$.  A greedy packing of translates of the witness
box gives at least
\[
       c_{r,d,\tau}\frac{N}{(f(N)+1)^d}=N^{1-o(1)}
\]
translates with pairwise disjoint queried edge sets.  The product of this
quantity and the displayed lower bound tends to infinity, so
Lemma~\ref{lem:many} gives at least $m$ occurrences with probability tending
to one.  In each occurrence, the root lies in an $X_L$-component of order at
least $f(N)$.  Pruning retains that entire component, so the rooted $r$-ball
remains $\tau$ in $Y_L$.
\end{proof}

\begin{lemma}\label{lem:torus-no-bounded}
For each fixed $r$, for all sufficiently large $L$, no vertex of $Y_L$ has a
rooted $r$-ball type that is not unbounded.
\end{lemma}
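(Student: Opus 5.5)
The statement is deterministic once $f(N)>(2r+3)^d$ and $L$ is large, so I would argue pointwise. Fix $r$, put $R=r+1$, and let $Q=[-R,R]^d\subseteq\Z^d$. Take $L$ so large that the quotient map $\Z^d\to T_L^d$, translated to be centred at any vertex $v$, is injective on $Q$ together with its incident edges; for instance $L>2R+2$ suffices. Also take $N$ so large that $f(N)>|Q|=(2R+1)^d$, which is possible since $f(N)\to\infty$. Let $v$ be a vertex of $Y_L$.

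\textbf{Step 1: the ball of $v$ in $Y_L$.} Since $Y_L$ is a union of whole components of $X_L$, we have $B_{Y_L}(v,r)=B_{X_L}(v,r)$. Every vertex of this ball is at lattice ($\ell^1$) distance at most $r$ from $v$. Hence the ball, together with all edges among its vertices, lies in the image of $Q$, where the quotient map is an isomorphism.

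\textbf{Step 2: construct $X\subseteq\Z^d$.} Put into $X$ the pull-back of every retained edge of $X_L$ having both endpoints in the image of $Q$, with $v$ pulled back to $0$. The component $C_{X_L}(v)$ has at least $f(N)>|Q|$ vertices, so it is not contained in the image of $Q$. Walk along a path in $C_{X_L}(v)$ from $v$ until it first leaves the image of $Q$. The portion of the path before that point lifts into $Q$, and all of its edges are already in $X$. It ends at a vertex $w$ with $\|w\|_\infty=R$, say $|w_i|=R$. Attach to $X$ the straight ray
\[
w+t\,\mathrm{sgn}(w_i)e_i,\qquad t\geq0.
\]
Apart from $w$, every vertex of this ray has sup-norm greater than $R$. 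Consequently $0$ lies in an infinite component of $X$.

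\textbf{Step 3: the ball of $0$ in $X$.} I must check that $B_X(0,r)$ is exactly the lift of $B_{X_L}(v,r)$. Any path of length at most $r$ from $0$ in $X$ stays at sup-norm at most $r<R$. It therefore never touches the new ray, whose vertices all have sup-norm at least $R$. So such paths use only lifted edges inside $Q$, and these match the edges of $X_L$ under the isomorphism on $Q$. The same holds for the induced edges among vertices at distance at most $r$. Hence $B_X(0,r)\cong B_{Y_L}(v,r)$, and this rooted type is unbounded, which is the claim.

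The only delicate point is Step 3. It depends on placing the ray outside the radius-$r$ region and on choosing $R=r+1$, so that attaching the ray cannot create new short paths or add vertices to the ball. The rest is bookkeeping about the injectivity radius of the quotient map.
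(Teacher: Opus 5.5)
Your proof is correct and follows the same route as the paper: a component of order at least $f(N)$ must leave a fixed box around $v$, and lifting the exiting path to $\Z^d$ and attaching a straight outward coordinate ray puts the root in an infinite component without changing its rooted $r$-ball. The only difference is bookkeeping. You use the box of radius $r+1$ and keep all retained edges inside it, whereas the paper keeps only the lifted ball, the rest of the path and the ray; your version makes the check that the ball is unchanged slightly more transparent.
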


\begin{proof}
Fix $r$.  For all sufficiently large $L$, the coordinate box
\[
        Q_r(v)=v+[-r,r]^d
\]
embeds in $T_L^d$ without wrap-around, and every graph-metric $r$-ball around
$v$ is contained in $Q_r(v)$.

Suppose that the component of $v$ in $Y_L$ contains a vertex outside
$Q_r(v)$.  Choose a shortest component path from $v$ to the first vertex $x$
outside this box and lift the path to $\Z^d$.  One coordinate of $x-v$ has
absolute value $r+1$.  Starting at $x$, continue indefinitely in the
corresponding outward coordinate direction.  Form a subgraph of $\Z^d$ by
taking the lifted rooted ball $B_{Y_L}(v,r)$, the part of the chosen
path beyond that ball, and this outward ray, while omitting all other edges.
Every
new vertex is at graph distance greater than $r$ from the root, so the rooted
$r$-ball is unchanged, and the root now lies in an infinite component.
Hence the rooted type of $v$ is unbounded.

It follows that a vertex whose rooted $r$-ball type is not unbounded must
have
its entire component contained in $Q_r(v)$.  Such a component has at most
$(2r+1)^d$ vertices.  Since $f(N)\to\infty$, this is smaller than $f(N)$ for
all sufficiently large $L$, contradicting the definition of
$Y_L=\Prune_{f(N)}(X_L)$.
\end{proof}

\begin{proof}[Proof of Theorem~\ref{thm:torus-core}]
Fix a first-order sentence $\varphi$, let $s=\qr(\varphi)$, and take $r,M$
from Theorem~\ref{thm:Hanf} for maximum degree $2d$ and rank $s$.  For all
large $L$, every rooted $r$-ball in $T_L^d$ lifts to $\Z^d$ and hence is a
$d$-lattice type; all non-lattice types are therefore deterministically
absent.  There are only finitely many rooted $r$-ball types of degree at most
$2d$.  Lemma~\ref{lem:torus-witness}, followed by a union bound over the
unbounded ones, shows that each unbounded $d$-lattice type occurs at least
$M$ times in $Y_L$ with probability tending to one.
Lemma~\ref{lem:torus-no-bounded} excludes every remaining type.  Thus, with
probability tending to one, the $(r,M)$-Hanf profile is the deterministic
profile
\[
  N_\tau(Y_L)\geq M\quad\Longleftrightarrow\quad
  \tau\text{ is an unbounded $d$-lattice type}.
\]
Hanf locality makes the truth value of $\varphi$ constant on this event.
This proves the zero--one law, including the case in which the radius supplied
by Hanf is $r=0$.

It remains to justify the asserted axiomatization.  The degree clause in
scheme~\textup{(i)} holds deterministically, and its remaining clauses hold
eventually at every fixed radius by the lifting observation above.
Scheme~\textup{(ii)} holds eventually for each fixed $m$ because every
component of $Y_L$ has at least $f(N)>m$ vertices.  Scheme~\textup{(iii)} is
Lemma~\ref{lem:torus-witness}, applied with an arbitrary fixed multiplicity
$h$.  Hence every displayed axiom belongs to the limiting almost-sure theory.
Moreover, every finite subset of the three schemes holds in $Y_L$ with
probability tending to one and is therefore satisfiable.  By compactness, the
full collection of schemes has a model.

Conversely, let $A$ be any graph satisfying all three schemes, fix $v\in A$,
and fix $r\geq0$.  Scheme~\textup{(ii)} implies that the component of $v$ is
infinite.  For every $R\geq r$, scheme~\textup{(i)} gives a rooted lattice
realization of $B_A(v,R)$.  The possible injective root-preserving graph
embeddings of this finite ball into $[-R,R]^d$ form a finite set; the image is
contained in that box because lattice displacement from the root is at most
graph distance.  Join an embedding at level $R+1$ to its restriction at
level $R$.  The resulting tree is infinite and finitely branching, so
K\H{o}nig's
lemma supplies a coherent branch.  The union of that branch embeds the entire
component of $v$ as an infinite subgraph of $\Z^d$.  Consequently
$B_A(v,r)$ is an unbounded $d$-lattice type.  Scheme~\textup{(iii)} now says
that every such type has arbitrarily many realizations.  Thus every model of
the schemes has, at every radius and cutoff, exactly the deterministic Hanf
profile displayed above.  Hanf locality shows that the schemes determine a
complete first-order theory, namely the limiting theory of $(Y_L)$.
\end{proof}

\subsection{One-sided polynomial regimes without pruning}

Throughout this subsection $d\geq2$.  We prove
Theorem~\ref{thm:polynomial}.  We give the details when $p_N\to0$ and
$q_N\to1$; the regime $q_N\to0$ follows by interchanging retained and
omitted edges.

Fix $r\geq0$ and let $W_r$ be the set of lattice edges with both endpoints in
$[-r-1,r+1]^d$.  A path of length at most $r$ from the root stays inside
$[-r,r]^d$, so, for every vertex $v$ and all sufficiently large $L$, the edge
states in the translate $v+W_r$ determine $B_{X_L}(v,r)$.  For a rooted
$d$-lattice $r$-ball type $\tau$, let $b(\tau)$ be the minimum number of
omitted edges among the complete assignments on $W_r$ that produce $\tau$.
Equivalently, an assignment attaining the minimum omits $b(\tau)$ edges and
retains every other edge in $W_r$.  This definition is independent of
enlarging the decision window by finitely many irrelevant edges.  In
particular, $b(\tau)=0$ only for the full rooted $r$-ball of $\Z^d$.

\begin{lemma}\label{lem:defect-expansion}
For every fixed $r$ and every rooted $d$-lattice $r$-ball type $\tau$, there
is a constant $C_\tau>0$ such that, uniformly in the root vertex and for all
sufficiently large $L$,
\begin{equation}\label{eq:type-prob}
 \Prob\bigl(B_{X_L}(v,r)\cong\tau\bigr)
   =C_\tau p_N^{b(\tau)}+O\bigl(p_N^{b(\tau)+1}\bigr).
\end{equation}
The constant depends only on $r$, $d$, and $\tau$.
\end{lemma}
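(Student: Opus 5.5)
The plan is to prove \eqref{eq:type-prob} by summing over complete edge assignments on the fixed decision window, once $L$ is large enough that the window embeds without wrap-around. Fix $r$ and $\tau$. For all sufficiently large $L$, the translate $v+W_r$ embeds in $T_L^d$ injectively, with its incident edges distinct. Its edge states are independent, and they determine $B_{X_L}(v,r)$. Hence
\[
  \Prob\bigl(B_{X_L}(v,r)\cong\tau\bigr)=\sum_{\omega\in\Omega_\tau} p_N^{|\omega|}q_N^{|W_r|-|\omega|},
\]
where $\Omega_\tau$ is the set of subsets $\omega\subseteq W_r$ such that omitting exactly $\omega$ and retaining the rest produces a rooted ball of type $\tau$. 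Since the embedding is a translate of a fixed window, $\Omega_\tau$ does not depend on $v$ or on $L$. This gives the asserted uniformity in the root automatically.

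Next I would group the terms by $j=|\omega|$. Let $n_j=|\{\omega\in\Omega_\tau:|\omega|=j\}|$. By definition of $b(\tau)$ we have $n_j=0$ for $j<b(\tau)$. Because $\tau$ is a $d$-lattice type, and for large $L$ every realization on the torus lifts to $\Z^d$, $\Omega_\tau$ is nonempty, so $n_{b(\tau)}\geq1$. Set $C_\tau=n_{b(\tau)}$, which depends only on $r,d,\tau$ through the window $W_r$. The leading terms contribute $C_\tau p_N^{b(\tau)}q_N^{|W_r|-b(\tau)}$. Since $q_N^{m}=(1-p_N)^m=1-O(p_N)$ for fixed $m$, this equals $C_\tau p_N^{b(\tau)}+O(p_N^{b(\tau)+1})$. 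The remaining terms with $j\geq b(\tau)+1$ are at most $\sum_{j>b(\tau)}\binom{|W_r|}{j}p_N^j=O(p_N^{b(\tau)+1})$, using $p_N\leq1$ and the fact that $|W_r|$ is a constant.

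The only point needing care is the initial reduction. I must check that the rooted ball really is a function of the states on $v+W_r$ alone, and that this function is the same as in $\Z^d$ once $L$ exceeds a constant multiple of $r$. The text preceding the lemma already notes that paths of length at most $r$ stay in $[-r,r]^d$. Induced edges between vertices at distance at most $r$ have both endpoints in that box, so they lie in $W_r$. Given this, the independence of the remark ``the definition is independent of enlarging the window'' is not needed here. The whole argument is a finite polynomial expansion in $p_N$, and the constant in the error term depends only on $|W_r|$.
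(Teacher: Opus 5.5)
Your proposal is correct and is essentially the paper's proof: you sum $p_N^{a}q_N^{c}$ over the mutually exclusive complete assignments on the translated window $v+W_r$ (a list independent of $v$ and $L$ once the window lifts to $\Z^d$), group by the number of omitted edges, and take $C_\tau$ to be the number of minimal assignments. Your explicit bounds $q_N^{m}=1-O(p_N)$ and $\sum_{j>b(\tau)}\binom{|W_r|}{j}p_N^{j}=O(p_N^{b(\tau)+1})$ simply spell out the paper's ``polynomial in $p_N$'' step, and nonemptiness of $\Omega_\tau$ is seen most directly by restricting a realization $X\subseteq\Z^d$ of $\tau$ to $W_r$, rather than via lifting torus realizations.
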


\begin{proof}
The decision window contains only finitely many edges, and the relevant list
of assignments is independent of $v$ and $L$ once the window lifts to
$\Z^d$.  An assignment with $a$ omitted and $c$ retained edges has probability
$p_N^a(1-p_N)^c$.  Summing the probabilities of the mutually exclusive assignments that
realize $\tau$, and grouping them by $a$, gives a polynomial in $p_N$.  Its
least nonzero degree is $b(\tau)$, and its leading coefficient $C_\tau$ is the positive
number of minimal assignments.  This proves the uniform expansion.
\end{proof}

\begin{lemma}\label{lem:poly-zero-one}
Assume $d\geq2$, $p_N\to0$, $q_N\to1$, and
$p_N=N^{-\alpha+o(1)}$ for some $\alpha>0$.
\begin{enumerate}[label=\textup{(\roman*)}]
\item If $\alpha\notin\{1/k:k\in\N\}$, then $(X_L)$ has a first-order
zero--one law.
\item If $\alpha=1/k$ and $N p_N^k\to0$ or $N p_N^k\to\infty$, then
$(X_L)$ has a first-order zero--one law.
\end{enumerate}
\end{lemma}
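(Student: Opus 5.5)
We will prove both parts through bounded-degree Hanf locality (Theorem~\ref{thm:Hanf}) with $\Delta=2d$. The plan is to show that, for every fixed $r$ and $M$, with probability tending to one each rooted $d$-lattice $r$-ball type $\tau$ satisfies either $N_\tau(X_L)=0$ or $N_\tau(X_L)\geq M$, and that which case occurs is deterministic. Non-lattice types are absent for large $L$ by the lifting remark at the start of Section~\ref{sec:torus}, and there are only finitely many types. So the $(r,M)$-Hanf profile is then eventually deterministic, and the truth value of any sentence of rank $s$ is forced.

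Fix a lattice type $\tau$ with defect $b=b(\tau)$; the case $b=0$ is handled by the same argument below. By Lemma~\ref{lem:defect-expansion} and linearity,
\[
\Ex N_\tau(X_L)=N\bigl(C_\tau p_N^{b}+O(p_N^{b+1})\bigr)=(1+o(1))C_\tau N p_N^{b}.
\]
\emph{Absent case.} If $Np_N^b\to0$, Markov's inequality gives $N_\tau(X_L)=0$ with high probability. In case (i) this happens when $b\alpha>1$, since then $Np_N^b=N^{1-b\alpha+o(1)}\to0$. In case (ii) it happens when $b>k$, and also when $b=k$ together with $Np_N^k\to0$.

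\emph{Saturated case.} If $Np_N^b\to\infty$, we apply Lemma~\ref{lem:many}. Take the events that the window $v+W_r$ carries a fixed minimal assignment for $\tau$, for $v$ ranging over a lattice of translates spaced $2r+3$ apart. These windows use pairwise disjoint edge sets, so the events are independent, and there are $\geq cN$ of them. Each event has probability $p_N^b q_N^{|W_r|-b}=(1+o(1))p_N^b$. Hence the hypothesis of Lemma~\ref{lem:many} reads $cNp_N^b\to\infty$, and $N_\tau\geq M$ with high probability. In case (i) this covers all $b$ with $b\alpha<1$. Because $\alpha\neq 1/b$ for every $b$, exactly one of these two alternatives applies to each $b$. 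In case (ii) the saturated case covers $b<k$, where $Np_N^b=N^{1-b/k+o(1)}\to\infty$, and also $b=k$ when $Np_N^k\to\infty$.

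Both alternatives depend only on $b(\tau)$, $\alpha$, $k$, and the given limit, not on the sample, so the Hanf profile is deterministic in every case. A union bound over the finitely many types completes the argument.

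The main obstacle is the boundary case $b=k$ in (ii) combined with the $o(1)$ in the exponent. Here we cannot appeal to exponents and must use the hypothesis $Np_N^k\to0$ or $\to\infty$ directly. The two-sided asymptotic $\Ex N_\tau\asymp Np_N^k$ from Lemma~\ref{lem:defect-expansion}, together with the independent-packing lower bound, handles it. We also note that $d\geq2$ is not actually needed for this part of the argument.
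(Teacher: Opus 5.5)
Your proof is correct and follows the paper's argument essentially step for step. The shared steps are: Hanf locality for maximum degree $2d$, deterministic absence of non-lattice types, the first-moment asymptotic $\Ex N_\tau(X_L)\sim C_\tau N p_N^{b(\tau)}$ from Lemma~\ref{lem:defect-expansion}, Markov's inequality when $N p_N^{b(\tau)}\to0$, a disjoint-window packing with Lemma~\ref{lem:many} when $N p_N^{b(\tau)}\to\infty$, and the hypothesis on $N p_N^k$ invoked only at $b(\tau)=k$. The differences are cosmetic: you lower-bound by occurrences of a single minimal assignment, with probability $(1+o(1))p_N^{b}$, rather than by the full type event, and you fold $b(\tau)=0$ into the saturated case instead of treating it separately.
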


\begin{proof}
Fix a sentence $\varphi$ and choose $r,M$ from Hanf locality for maximum
degree $2d$ and quantifier rank $\qr(\varphi)$.  Once $L$ is large enough,
every rooted $r$-ball in $X_L$ is a $d$-lattice type, so every non-lattice
type is deterministically absent.  For a $d$-lattice type $\tau$,
equation~\eqref{eq:type-prob} gives
\[
       \Ex N_\tau(X_L)=N C_\tau p_N^{b(\tau)}(1+o(1)).
\]
If $b(\tau)=0$, choose a positive-density set of centers with disjoint
decision windows.  The corresponding independent events have probabilities
tending to $C_\tau>0$, so the type occurs at least $M$ times with high
probability.

For $b(\tau)\geq1$ and $\alpha\notin\{1/j:j\in\N\}$,
\[
       N p_N^{b(\tau)}=N^{1-\alpha b(\tau)+o(1)}
\]
tends either to $0$ or to $\infty$.  If $\alpha=1/k$, the same conclusion
holds for every $b(\tau)\neq k$; for $b(\tau)=k$, it follows from the
additional assumption on $N p_N^k$.

If $N p_N^{b(\tau)}\to0$, Markov's inequality shows that $\tau$ is absent
with high probability.  If $N p_N^{b(\tau)}\to\infty$, a fixed-window packing
provides $c_{r,d}N$ centers with pairwise disjoint decision windows.  The
corresponding events are independent and have probability asymptotic to
$C_\tau p_N^{b(\tau)}$, so Lemma~\ref{lem:many} shows that $\tau$ occurs at
least $M$ times with high probability.

There are only finitely many rooted $r$-ball types of degree at most $2d$.
Taking the intersection of the preceding high-probability events gives a
deterministic truncated Hanf profile.  Hanf locality fixes the truth value of
$\varphi$ with probability tending to one.
\end{proof}

\begin{lemma}\label{lem:poly-convergence}
Assume $d\geq2$, $p_N\to0$, $q_N\to1$, and
$N p_N^k\to\lambda\in(0,\infty)$ for some $k\in\N$.  Then $(X_L)$ has a
first-order convergence law.
\end{lemma}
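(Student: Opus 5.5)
Fix a first-order sentence $\varphi$, and take $r,M$ from Theorem~\ref{thm:Hanf} for maximum degree $2d$ and rank $\qr(\varphi)$. By Hanf locality, the truth value of $\varphi$ in $X_L$ is a deterministic function of the truncated profile
\[
  \bigl(\min\{N_\tau(X_L),M\}\bigr)_{\tau},
\]
where $\tau$ ranges over the finitely many rooted $r$-ball types of degree at most $2d$. It therefore suffices to show that this finite random vector converges in distribution. Then $\Prob(X_L\models\varphi)$, being the probability of a fixed subset of a finite state space, converges. Since $p_N=N^{-1/k+o(1)}$ here, we will sort the types by the defect number $b(\tau)$ using Lemma~\ref{lem:defect-expansion}, exactly as in the proof of Lemma~\ref{lem:poly-zero-one}.

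Non-lattice types are deterministically absent for large $L$. The type with $b(\tau)=0$ occurs at least $M$ times with high probability by the disjoint-window argument. If $1\le b(\tau)<k$, then $Np_N^{b(\tau)}\to\infty$, and a fixed-window packing together with Lemma~\ref{lem:many} gives at least $M$ occurrences with high probability. If $b(\tau)>k$, then $\Ex N_\tau=O(Np_N^{k+1})=o(1)$, so the type is absent with high probability. In all these cases the truncated coordinate converges to a constant. The only random coordinates are those with $b(\tau)=k$.

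For the types with $b(\tau)=k$ we apply Lemma~\ref{lem:poisson}. Take $\mathcal G=\Z^d$ with the translation group, the quotients $\mathcal G_N=T_L^d$ (whose injectivity radius is of order $L\to\infty$), rare state equal to ``omitted'', and $\rho_N=p_N$. Let $\mathcal D$ be the finite set of translation orbits of $k$-element omitted-edge sets $S$ inside a common protective window: namely, all $k$-sets contained in some translate of $W_r$ that, together with every other edge of the window retained, realize one of the relevant types at some root. The number of candidate occurrences of each orbit is $a_DN$ exactly on the torus, with $a_D>0$. Next we verify the second hypothesis of Lemma~\ref{lem:poisson}. Any realization of a type $\tau$ with $b(\tau)=k$ at $v$ has at least $k$ omitted edges in $v+W_r$. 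If there are exactly $k$, the set is one of the $\mathcal D$-configurations with the rest of the window retained, after enlarging to the protective window. Otherwise it uses at least $k+1$ omitted edges in a bounded window. The lemma then yields joint convergence of $(\min\{N_{\tau}(X_L),M\})_{b(\tau)=k}$. We combine this with the deterministic limits for the other coordinates, using Slutsky's theorem on a discrete space, to get convergence of the full truncated profile.

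The main obstacle is the bookkeeping in this last application. We must choose the protective window large enough that, off the $O(Np_N^{k+1})=o(1)$ exceptional event (two defect clusters within bounded distance, or an extra omitted edge near a cluster), every rooted $r$-ball with $b(\tau)=k$ is determined solely by the single nearby $k$-set of omitted edges. That is, we must show that the count $N_\tau$ is a fixed function (a finite linear combination) of the cluster counts $W_{D,N}$. One must also check that the conditioning ``all other window edges common'' costs only a factor $1+o(1)$, since $q_N\to1$. This step is exactly what Lemma~\ref{lem:poisson} packages, so the proof reduces to verifying its hypotheses carefully, as above.
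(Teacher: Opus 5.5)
Your proposal is correct and follows essentially the same route as the paper. Hanf locality reduces the question to the truncated profile. The defect number $b(\tau)$ from Lemma~\ref{lem:defect-expansion} makes every coordinate with $b(\tau)\neq k$ deterministic with high probability. Lemma~\ref{lem:poisson}, applied to the translation orbits of minimal $k$-edge omitted sets, then gives the joint limit of the critical truncated counts. Your extra remarks (exact candidate counts $a_D N$ via trivial stabilizers, the Slutsky combination with the deterministic coordinates, and the $O(Np_N^{k+1})$ exceptional event) only spell out bookkeeping that the paper leaves implicit.
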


\begin{proof}
Fix $\varphi$ and choose $r,M$ from Hanf locality for maximum degree $2d$
and quantifier rank $\qr(\varphi)$.  Non-$d$-lattice rooted $r$-ball types
are deterministically absent for all large $L$.  For a $d$-lattice type
$\tau$, Lemma~\ref{lem:defect-expansion} gives \eqref{eq:type-prob}.  If
$b(\tau)<k$, then $N p_N^{b(\tau)}\to\infty$, and the disjoint-window
argument from Lemma~\ref{lem:poly-zero-one} shows that $\tau$ occurs at least
$M$ times with high probability.  If $b(\tau)>k$, then
$N p_N^{b(\tau)}\to0$, and Markov's inequality makes $\tau$ absent with high
probability.  Thus only the types with $b(\tau)=k$ have a non-deterministic
truncated count.

Take a common fixed decision window for all critical types.  List, up to
translation in $\Z^d$, all underlying $k$-edge omitted sets arising in their
minimal assignments, merging all rooted descriptions generated by the same
set.  This is a finite list.  Every listed orbit has $a_D N+o(N)$ candidates
in $T_L^d$, with $a_D>0$, and the quotient maps
$\Z^d\to T_L^d$ have injectivity radius tending to infinity.  Hence
Lemma~\ref{lem:poisson}, with $\rho_N=p_N$, gives a joint Poisson limit for
the underlying clusters.  A realization of a critical type not represented
by a minimal assignment uses at least $k+1$ omitted edges in a fixed window,
and the expected number of all such realizations is
\[
             O(N p_N^{k+1})=O((N p_N^k)p_N)=o(1).
\]
It follows that the vector of critical rooted-type counts, truncated at $M$,
has a joint limiting distribution.  The untruncated counts may be compound
Poisson, because one defect set can be observed from several roots.

With probability tending to one all noncritical coordinates of the Hanf
profile have their deterministic absent or saturated values.  The complete
truncated profile takes values in a finite set, and its distribution
converges.  Hanf locality makes the truth of $\varphi$ a function of that
profile, so $\Prob(X_L\models\varphi)$ converges.
\end{proof}

\begin{lemma}\label{lem:visible-critical-pattern}
For every $d\geq2$ and every $k\geq1$ there are a rooted ball type
$\sigma_{d,k}$, of some fixed radius and with minimal omitted-edge defect
exactly $k$, and a constant $a_{d,k}>0$ with the following property.  Let
$\theta_{d,k}$ be the first-order sentence asserting that some vertex has
rooted type $\sigma_{d,k}$.  In the regime $p_N\to0$, $q_N\to1$, whenever
\[
        N p_N^k\longrightarrow\lambda\in[0,\infty],
\]
one has
\[
        \Prob(X_L\models\theta_{d,k})
        \longrightarrow 1-e^{-a_{d,k}\lambda},
\]
where $e^{-a_{d,k}\infty}$ is interpreted as $0$.
\end{lemma}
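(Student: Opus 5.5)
We will construct $\sigma_{d,k}$ explicitly so that each of its realizations forces a \emph{unique} underlying $k$-edge omitted set, and distinct omitted sets give distinct observing roots. Then the event $X_L\models\theta_{d,k}$ coincides, with high probability, with the event that a Poisson cluster count is positive.

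\textbf{Choice of pattern.} Fix the root at $0$. Take $k$ edges incident to the root, say $\{0,e_1\}$ and, for $k\ge2$, further edges $\{0,\pm e_i\}$ chosen among the $2d$ root edges. If $k\le 2d-1$, let $\sigma_{d,k}$ be the rooted $r$-ball (for a suitable fixed radius, $r=2$ suffices) obtained from the full lattice ball by omitting exactly these $k$ root edges. The root then has degree $2d-k\ge1$, and every other vertex of the ball looks locally full except for the endpoints of the omitted edges. For $k\ge 2d$ we instead omit a chain of edges along a straight segment: for instance all $2d-1$ edges at the root other than one, and the remaining $k-2d+1$ omitted edges placed at fixed prescribed positions along the first coordinate axis. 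Any rigid fixed configuration works, provided it has two properties. First, the rooted ball type identifies the positions of the omitted edges up to the automorphisms fixing the configuration. Second, the root is distinguishable, for example as the unique vertex of minimal degree at the centre of the pattern.

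\textbf{Steps.} First, we show $b(\sigma_{d,k})=k$. The $r$-ball of a realization displays every missing lattice edge inside it as a missing adjacency. Since $d\ge2$ and $r$ is fixed, distances within the ball are lattice distances unless a detour is cut off, so the lifted ball determines the omitted edges within $W_r$. Hence every realization omits at least the $k$ prescribed edges, with equality only for the canonical assignment up to symmetry. Second, we apply Lemma~\ref{lem:poisson} with the single orbit $D$ given by this $k$-edge set, with $\rho_N=p_N$ and $a_{d,k}=a_D>0$ equal to the number of distinct placements per vertex (the orbit size divided by $N$). This gives $W_{D,N}\Rightarrow\Po(a_{d,k}\lambda)$ when $\lambda\in(0,\infty)$. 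Third, realizations of $\sigma_{d,k}$ that are not canonical use at least $k+1$ omitted edges in a fixed window. Their expected number is $O(Np_N^{k+1})=o(1)$ when $\lambda<\infty$, so $\theta_{d,k}$ holds iff $W_{D,N}\ge1$ up to $o(1)$ probability. A canonical occurrence does produce a root of type $\sigma_{d,k}$, since the protective window is common. This yields the limit $1-e^{-a_{d,k}\lambda}$.

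\textbf{Endpoint cases.} If $\lambda=0$, Markov's inequality together with \eqref{eq:type-prob} gives $\Prob(\theta_{d,k})\le NC_\sigma p_N^k(1+o(1))\to0$. If $\lambda=\infty$, the disjoint-window packing argument and Lemma~\ref{lem:many} used in Lemma~\ref{lem:poly-zero-one} give an occurrence with high probability, since $N\cdot C_\sigma p_N^k\to\infty$. Both match $1-e^{-a_{d,k}\lambda}$. A general $\lambda_N\to\lambda$ with subsequences is handled the same way.

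\textbf{Main obstacle.} The delicate point is the first step for large $k$: the pattern must be chosen so that its rooted ball determines its omitted edge set. Without this, a type of defect $k$ could also be produced by a different $k$-set, which would merge orbits and change $a_{d,k}$ but not the form of the limit. If uniqueness fails, we would still conclude, by summing over the finitely many orbits in $\mathcal D$ that produce $\sigma_{d,k}$. The event is then that at least one of several independent Poisson counts is positive, giving $1-e^{-(\sum_D a_D)\lambda}$. This works provided each canonical occurrence really yields at least one root of the type, which holds by construction. The only other requirement is to check that the chosen configuration fits in a fixed window, so that the injectivity-radius hypothesis of Lemma~\ref{lem:poisson} applies.
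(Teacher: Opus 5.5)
Your overall architecture matches the paper's. You fix an explicit defect pattern and apply Lemma~\ref{lem:poisson} to every minimal $k$-edge orbit realizing it, with $a_{d,k}$ the sum of their densities. You discard non-minimal realizations at cost $O(Np_N^{k+1})$, use Markov at $\lambda=0$, and use disjoint-window packing with Lemma~\ref{lem:many} at $\lambda=\infty$.

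\textbf{The gap: the lower bound $b(\sigma_{d,k})\geq k$.} You justify it by saying that the lifted ball of a realization ``determines the omitted edges within $W_r$'', so that every realization omits the $k$ prescribed edges. But a realization is only a vertex $v$ with $B_{X_L}(v,r)$ isomorphic to $\sigma_{d,k}$ as an abstract rooted graph. Nothing you say makes that isomorphism a lattice symmetry. So ``the prescribed edges'' have no meaning in another realization, and you have not excluded a realization of the same rooted graph using fewer omitted edges. Your fallback of summing over several orbits only handles non-uniqueness of minimal $k$-sets; it presupposes that the minimum is $k$. For $k\geq 2d$ the pattern is also underspecified. If the omitted axis edges sit outside the ball (for instance with $r=2$ and a degree-one root), they do not affect the type at all, and the defect is only $2d-1<k$. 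Your placement and radius are not pinned down enough to exclude this.

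\textbf{The missing idea: the paper's degree-deficit count, which needs no rigidity.}
\begin{enumerate}[label=\textup{(\arabic*)}]
\item A rooted isomorphism preserves distance to the root and degrees.
\item A vertex at distance $<r$ from the root has all of its $X_L$-neighbours inside the ball. So its degree in the ball is $2d$ minus the number of omitted torus edges at it.
\item Hence, if every endpoint of every edge of the defect set $S$ is interior in $\sigma_{d,k}$, the interior deficit of $\sigma_{d,k}$ totals $2k$. In any realization this deficit must be produced by omitted edges, and each omitted edge covers at most two units. So every realization omits at least $k$ edges.
\end{enumerate}

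For your $k\leq 2d-1$ pattern the root alone suffices: root degree $2d-k$ forces $k$ distinct omitted root edges. That case is fine once you argue it this way rather than via rigidity.

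In general you must ensure that the endpoints of $S$ lie in the root's component, and then choose the radius only afterwards, large enough that all of them are interior. This is why the paper uses $k$ separated edges with detours, making $\Z^d\setminus S$ connected, and fixes $R$ after $S$. With such an $R$, your own large-$k$ pattern also works. That includes the far endpoints of the omitted root edges, which are reachable through the retained root edge.
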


\begin{proof}
Choose a root at $0$ and a set $S$ of $k$ pairwise vertex-disjoint lattice
edges in a fixed box, none incident with the root.  Take the edges sufficiently
separated and parallel to the first coordinate axis.  Since $d\geq2$, choose them so that each has a three-edge detour in the
second coordinate direction and these detours avoid $S$.  Consequently the
subgraph
\[
                 X^*=\Z^d\setminus S
\]
obtained by omitting precisely the edges in $S$ is connected: any lattice path
can have each use of an edge in $S$ replaced by its fixed detour.

Choose $R$ so large that every endpoint of an edge in $S$, together with all
of its lattice neighbors, has $X^*$-distance strictly less than $R$ from the
root.  Let
\[
                 \sigma_{d,k}=B_{X^*}(0,R).
\]
Finally let $P$ be the complete assignment on the decision window $W_R$ which
omits the edges in $S$ and retains every edge in $W_R\setminus S$.  The order
of these choices is important: $W_R$ is chosen after the observation radius,
and it contains every edge whose state can affect the rooted $R$-ball.
Therefore an occurrence of $P$ forces rooted type $\sigma_{d,k}$, regardless
of the states outside the translated window.

The displayed realization uses $k$ omitted edges.  In $\sigma_{d,k}$ the
$2k$ endpoints of $S$ are strictly inside the ball and have degree $2d-1$;
here ``interior'' means graph distance less than $R$ from the root.  Every
other interior vertex has degree $2d$.  In any realization of the same
rooted type in a sufficiently large torus, an interior vertex has all of its
retained neighbors inside the ball.  Thus each of these $2k$ vertices must
be incident with an omitted edge.  One omitted edge accounts for at most two
of the deficits, so every realization requires at least $k$ omitted edges.
Hence $b(\sigma_{d,k})=k$.

Assume first that $N p_N^k\to\lambda\in(0,\infty)$.  In the fixed window
$W_R$, list, up to lattice translation, all underlying minimal $k$-edge sets
that realize $\sigma_{d,k}$, forgetting the choice of an observing root and
merging descriptions with the same rare-edge set.  This list is finite and
nonempty.  Lemma~\ref{lem:poisson}, applied to the fixed local model
$\Z^d$ and its torus quotients, shows that their underlying cluster counts
converge jointly to independent Poisson random variables.  Write $a_{d,k}>0$
for the sum of their orbit densities.  The expected number of realizations
using at least $k+1$ omitted edges is
\[
        O(N p_N^{k+1})=O((N p_N^k)p_N)\longrightarrow0.
\]
Markov's inequality therefore shows that, up to an event of probability
$o(1)$, the sentence $\theta_{d,k}$ holds exactly when at least one minimal
defect cluster occurs.  Hence
\[
        \Prob(X_L\models\theta_{d,k})
        \longrightarrow 1-e^{-a_{d,k}\lambda}.
\]

If $N p_N^k\to0$, Lemma~\ref{lem:defect-expansion} gives
$\Ex N_{\sigma_{d,k}}(X_L)=O(N p_N^k)=o(1)$, so Markov's inequality gives the
same formula with $\lambda=0$.  If $N p_N^k\to\infty$, choose a
positive-density family of translates of the particular specification $P$
with pairwise disjoint translates of $W_R$.  Each occurs with probability
\[
       p_N^k q_N^{|W_R|-k}=(1+o(1))p_N^k,
\]
and every occurrence forces rooted type $\sigma_{d,k}$.  Lemma~\ref{lem:many}
shows that at least one occurs with probability tending to one.  This is the
asserted formula for $\lambda=\infty$.
\end{proof}

\begin{lemma}\label{lem:poly-not-zero-one}
Fix $d\geq2$ and $k\geq1$, and assume $p_N\to0$ and $q_N\to1$.
\begin{enumerate}[label=\textup{(\roman*)}]
\item If $N p_N^k\to\lambda\in(0,\infty)$, then the zero--one law fails.
\item If $N p_N^k$ has no limit in $[0,\infty]$, then the convergence law
fails.
\end{enumerate}
\end{lemma}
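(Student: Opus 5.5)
Both parts follow from Lemma~\ref{lem:visible-critical-pattern}, applied to the single first-order sentence $\theta_{d,k}$. That lemma gives the limiting probability of $\theta_{d,k}$ along any sequence for which $N p_N^k$ has an extended limit. We will use it directly for (i) and along subsequences for (ii).

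For (i), assume $N p_N^k\to\lambda\in(0,\infty)$. Lemma~\ref{lem:visible-critical-pattern} gives
\[
\Prob(X_L\models\theta_{d,k})\to 1-e^{-a_{d,k}\lambda}.
\]
Since $a_{d,k}>0$ and $0<\lambda<\infty$, this limit lies strictly between $0$ and $1$. So $\theta_{d,k}$ witnesses the failure of the zero--one law. Lemma~\ref{lem:poly-convergence} supplies convergence, but it is not needed for the negative statement.

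For (ii), assume $\lambda_N=N p_N^k$ has no limit in $[0,\infty]$. Then its $\liminf$ and $\limsup$ in the extended interval differ, say $\ell_1<\ell_2$ with $\ell_1,\ell_2\in[0,\infty]$. By compactness of $[0,\infty]$ we can pick subsequences of $N$ along which $\lambda_N\to\ell_1$ and $\lambda_N\to\ell_2$ respectively. The proof of Lemma~\ref{lem:visible-critical-pattern} uses only the asymptotics along whatever index sequence is considered: the Poisson limit via Lemma~\ref{lem:poisson}, Markov's inequality, and Lemma~\ref{lem:many} all apply along subsequences, since the hypotheses $p_N\to0$ and $q_N\to1$ persist there. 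Hence the lemma holds along each subsequence, and $\Prob(X_L\models\theta_{d,k})$ converges to $1-e^{-a_{d,k}\ell_1}$ along the first and to $1-e^{-a_{d,k}\ell_2}$ along the second.

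The map $x\mapsto 1-e^{-a_{d,k}x}$, with the convention $e^{-a_{d,k}\infty}=0$, is strictly increasing and injective on $[0,\infty]$ because $a_{d,k}>0$. So the two subsequential limits differ, $\Prob(X_L\models\theta_{d,k})$ does not converge, and the convergence law fails.

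The only point needing care is justifying that Lemma~\ref{lem:visible-critical-pattern} may be applied along subsequences. The sequence is indexed by $L$ with $N=L^d$, so a subsequence is just a subsequence of side lengths, and every estimate in that lemma is asymptotic in $N$ with constants independent of the sequence. This is the step to state explicitly; the rest is immediate.
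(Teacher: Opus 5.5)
Your proposal is correct and follows the paper's proof essentially verbatim. Both parts use the sentence $\theta_{d,k}$ from Lemma~\ref{lem:visible-critical-pattern}: part (i) is read off directly, and part (ii) comes from two distinct subsequential limits in $[0,\infty]$ together with the strict monotonicity of $\lambda\mapsto 1-e^{-a_{d,k}\lambda}$. Your explicit remark that the lemma may be applied along subsequences of side lengths is a step the paper uses without stating it.
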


\begin{proof}
Use the sentence $\theta_{d,k}$ from
Lemma~\ref{lem:visible-critical-pattern}.

If $N p_N^k\to\lambda\in(0,\infty)$, then
\[
       \Prob(X_L\models\theta_{d,k})\longrightarrow 1-e^{-a_{d,k}\lambda},
\]
which is strictly between $0$ and $1$.  Hence the zero--one law fails.

If $N p_N^k$ has no extended limit, compactness of $[0,\infty]$ gives two
distinct subsequential limits $\lambda_1$ and $\lambda_2$.  Along a
subsequence on which $N p_N^k\to\lambda_i$,
Lemma~\ref{lem:visible-critical-pattern} gives
\[
       \Prob(X_L\models\theta_{d,k})\longrightarrow 1-e^{-a_{d,k}\lambda_i},
\]
where $e^{-a_{d,k}\infty}$ is interpreted as $0$.  The map
$\lambda\mapsto1-e^{-a_{d,k}\lambda}$ is strictly increasing on the extended
half-line, so these two subsequential limits are different.  Hence the
convergence law fails.
\end{proof}

\begin{proof}[Proof of Theorem~\ref{thm:polynomial}]
Part~\textup{(a)} follows from Lemmas~\ref{lem:poly-zero-one},
\ref{lem:poly-convergence}, and~\ref{lem:poly-not-zero-one}.  For
part~\textup{(b)}, replace ``omitted'' by ``retained'' throughout.  A rooted
type is then measured by the least number $a(\tau)$ of retained edges needed
to realize it when all nearby edges are omitted, and the same estimates use
$N q_N^{a(\tau)}$.  For the analogue of
Lemma~\ref{lem:visible-critical-pattern}, take a fixed straight lattice path
of length $k$, root it at one endpoint, retain its $k$ edges, and omit every
other lattice edge incident with one of its $k+1$ vertices.  A fixed decision
window then forces the root component to be exactly this path.  Observe the
component at radius $k+1$, so every path vertex is strictly inside the rooted
ball and no retained edge can leave it.  Any realization of this rooted type
therefore contains a connected graph on $k+1$ vertices and hence at least
$k$ retained edges.  The prescribed path uses exactly $k$, so its minimal
retained-edge defect is $k$.  The same
fixed-local-model Poisson argument gives the critical existence probabilities
and completes the proof.
\end{proof}

\begin{remark}\label{rem:one-dimensional}
The polynomial theorem was stated for $d\geq2$ because in dimension one
deleted edges are cuts.  When $d=1$ and $L\geq3$, $T_L^1$ is a cycle and every
component of a proper percolated subgraph is a path.  If $p=1-q$ and
$P_{\ell,L}$ denotes the number of path components with $\ell$ vertices, then
for $1\leq\ell<L$ the exact expectation is
\[
        \Ex P_{\ell,L}=L p^2 q^{\ell-1}.
\]
For $\ell=L$, a path on all $L$ vertices occurs exactly when one cycle edge is
omitted, and hence
\[
        \Ex P_{L,L}=L p q^{L-1}.
\]
Thus the first formula is not uniform up to $\ell=L$.  These path-component
thresholds are one-dimensional phenomena; in dimension $d\geq2$, components
of a percolated torus need not be paths.
\end{remark}

\section{Monadic second-order logic}\label{sec:mso}

We now prove the main monadic theorem.  We work with the standard
incidence presentation of $\mathrm{MSO}_2$: the structure has vertex and edge
sorts, together with the incidence relation, and monadic variables may range
over either sort.  In $\mathrm{MSO}_1$, monadic variables range only over
vertex sets.

The first-order torus arguments rely on locality, which does not extend to
monadic logic in this generality because MSO can express global properties
such as bipartiteness.  The pruned Erd\H{o}s--R\'enyi graph, however, is a
forest with high probability.  On disjoint unions, the Feferman--Vaught
composition method applies to MSO \cite{FefermanVaught,Makowsky}; on each tree
component, MSO is controlled by finite tree automata
\cite{ThatcherWright,Doner,Courcelle}.  We isolate the finite-rank consequence
needed for forests.

\begin{lemma}\label{lem:mso-FV-forest}
For every $s$ there are a finite set $\mathcal A_s$, an
$\mathrm{MSO}_2$-definable partition of the finite trees
\[
        \tau_s:\{\text{finite trees}\}\longrightarrow\mathcal A_s,
\]
and a number $M_s$ with the following property.  For a finite forest $F$ and
$a\in\mathcal A_s$, let
\[
        n_a(F)=
        \bigl|\{C:C \text{ is a component of }F\text{ and }\tau_s(C)=a\}\bigr|.
\]
If two finite forests $F$ and $F'$ satisfy, for every $a\in\mathcal A_s$,
\[
        n_a(F)=n_a(F')<M_s
        \qquad\text{or}\qquad
        n_a(F),n_a(F')\geq M_s,
\]
then $F$ and $F'$ satisfy the same $\mathrm{MSO}_2$ sentences of quantifier
rank at most $s$.  Here ``definable partition'' means that for each
$a\in\mathcal A_s$ there is an $\mathrm{MSO}_2$ sentence $\chi_a$ such that
$\tau_s(T)=a$ if and only if $T\models\chi_a$.
\end{lemma}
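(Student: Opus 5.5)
The approach is the MSO Feferman--Vaught theorem for disjoint unions, specialized to forests, with the type map $\tau_s$ taken to be the MSO$_2$ rank-$s$ type. Concretely, let $\mathcal A_s$ be the set of $\equiv^{\mathrm{MSO}_2}_s$-classes of finite trees, where $T\equiv^{\mathrm{MSO}_2}_s T'$ means the two trees satisfy the same $\mathrm{MSO}_2$ sentences of quantifier rank at most $s$ in the incidence presentation. Up to logical equivalence there are only finitely many such sentences over the two-sorted incidence vocabulary, so $\mathcal A_s$ is finite. Each class $a$ is defined by its characteristic sentence $\chi_a$, the conjunction of the rank-$\le s$ sentences true in it and the negations of the false ones. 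This gives the definable partition required in the statement.

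The main step is an Ehrenfeucht--Fraïssé game argument for $\mathrm{MSO}_2$ with $s$ rounds, played on the incidence structures of $F$ and $F'$. It parallels the proof of Lemma~\ref{lem:forest-FV}, but each move may now be a set move. The incidence structure of a forest is the disjoint union of the incidence structures of its components, since edges are incident only to vertices of their own component. Hence it suffices to prove a composition statement: if the multiset of component types agrees up to threshold $M_s$, then Duplicator wins the $s$-round MSO game.

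We prove this by induction on $s$, using the standard strengthening to expanded types. Components carry the interpretations of the set variables chosen so far, together with any constants chosen in point moves. For each tuple of labels we take the finitely many rank-$(s-j)$ types of labelled trees. The set of labelled types is still finite at each stage because the number of labels is bounded by $s$.

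\begin{itemize}
\item \emph{Set move.} Spoiler picks a set $X$ in $F$. It decomposes as a union of its traces $X\cap C$ on the components. Each labelled component $C$ then acquires a refined type, namely its type with the new predicate. Duplicator must choose a set in $F'$ whose traces realize the same refined-type counts up to the next threshold. She does this by matching components of equal old type, bijectively when the counts are below threshold. When the counts exceed threshold, she distributes among the many available components so that each refined type's count agrees or both counts exceed the smaller threshold $M_{s-1}$. For this to work, $M_s$ must dominate $|\mathcal A'_{s-1}|\cdot M_{s-1}$, where $\mathcal A'_{s-1}$ is the set of refined types. Each matched pair of components has equal rank-$(s-1)$ refined type by the inductive hypothesis on components.
\item \emph{Point move.} A point move, whether of a vertex or an edge element, is handled as in Lemma~\ref{lem:forest-FV}: it marks one component with a constant.
\end{itemize}

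The standard alternative, which I would cite if permitted, is the Feferman--Vaught / Shelah composition theorem for MSO over disjoint unions \cite{FefermanVaught,Makowsky}. It states that the rank-$s$ MSO type of a disjoint union is determined by the counts of component types truncated at a threshold, where the threshold depends only on $s$. This is because the rank-$s$ theory of a disjoint union of many copies of a fixed structure stabilizes.

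The main obstacle is the bookkeeping of thresholds under set moves. A single set move splits one type class into up to $|\mathcal A'|$ refined classes, and the recursive definition of $M_s$ must absorb this splitting. I would define $M_s$ recursively, for instance $M_s=|\mathcal A'_{s-1}|\,M_{s-1}+1$ with $M_0=1$, and check in the induction that over-threshold counts remain over threshold, or become equal, after Duplicator's redistribution.
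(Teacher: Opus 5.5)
Your proof is correct, but it is organized differently from the paper's. The paper never plays a game on the forests themselves. It cites the finite-rank MSO Feferman--Vaught theorem to obtain two things: a finite definable partition $\mathcal A_s$, given by the Boolean cells of the component formulas that theorem supplies, and a rank $r_s$ such that the rank-$s$ $\mathrm{MSO}_2$ theory of $F$ is determined by the rank-$r_s$ MSO theory of the index structure $I_F=(\operatorname{Comp}(F);(P_a)_{a\in\mathcal A_s})$. The only thing it proves by hand is the cutoff lemma for MSO on a pure set with finitely many unary predicates. There a set move just splits each cell into two, so a cutoff satisfying $M(r)\geq 2M(r-1)$ suffices.

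You instead prove the composition itself by a direct Ehrenfeucht--Fra\"iss\'e game on the forests, with $\tau_s$ the rank-$s$ $\mathrm{MSO}_2$ type. A set move then splits each class into as many refined types as a component of that type can realize, which is why you need the larger recursion $M_s>|\mathcal A'_{s-1}|M_{s-1}$. In exchange, the argument is self-contained, gives explicit classes and cutoffs, and needs no auxiliary rank $r_s$.

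When you write it out, three points need care. First, the fact that every component of old type $a$ can realize each refined type realized by another component of type $a$ is the set-move clause of the back-and-forth characterization of $\equiv_s$ on single labeled trees. It is not the inductive hypothesis, as you state. Second, $|\mathcal A'_{s-1}|$ should be the maximum over all label signatures with at most $s$ labels, since the number of labels grows during the game. Third, each component's labeled type should record which chosen elements it contains. Then the component containing a given chosen element is the unique realizer of its type on both sides, which lets point moves fit your purely count-based invariant.
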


\begin{proof}
Represent each graph by its two-sorted incidence structure, with unary
predicates for the vertex and edge sorts.  The finite-rank MSO
Feferman--Vaught theorem for disjoint sums gives, for every $s$, a finite
MSO-definable partition $\mathcal A_s$ of the possible summands and an
integer $r_s$ such that the rank-$s$ MSO theory of a disjoint sum is
determined by the rank-$r_s$ MSO theory of its index structure
\[
       I_F=\bigl(\operatorname{Comp}(F);(P_a)_{a\in\mathcal A_s}\bigr),
\]
where $P_a(C)$ holds exactly when the component $C$ belongs to the class
$a$ \cite{FefermanVaught,Makowsky}.  Taking Boolean cells of the finitely many
component formulas supplied by the composition theorem gives the asserted
partition; each cell is defined by a finite Boolean combination of
$\mathrm{MSO}_2$ sentences and hence by a single $\mathrm{MSO}_2$ sentence
$\chi_a$.

It remains to analyze the index structures.  A pure set with finitely many
unary predicates has only finitely many Boolean cells.  For this fixed finite
unary signature and every quantifier rank $r$, there is a cutoff $M(r)$ such
that two such structures are
MSO-equivalent up to rank $r$ whenever, in each cell, the two cardinalities
are equal below $M(r)$ or are both at least $M(r)$.  This follows directly
from the monadic Ehrenfeucht--Fra\"iss\'e game.  An element move consumes at
most one point of one cell.  A set move splits every current cell into two;
choosing the preceding cutoff at least twice the next cutoff lets Duplicator
match every small part exactly and keep every remaining corresponding part
above the next cutoff.  Induction on the number of remaining moves proves the
claim (and simultaneously handles the finitely many pebbled elements).

Apply this cutoff statement with rank $r_s$ and put $M_s=M(r_s)$.  The
hypothesis of the lemma makes $I_F$ and $I_{F'}$ equivalent up to rank $r_s$;
the composition theorem then makes $F$ and $F'$ equivalent up to rank $s$ in
$\mathrm{MSO}_2$.
\end{proof}

\begin{lemma}\label{lem:mso-tree-pumping}
For every $s$ there is a constant $C_s$ such that the following holds.  If
$a\in\mathcal A_s$ has representatives of arbitrarily large order, then for
every $m\geq1$ there is a finite tree $T$ such that
\[
        \tau_s(T)=a
        \qquad\text{and}\qquad
        m\leq |T|\leq C_s m.
\]
\end{lemma}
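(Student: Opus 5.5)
This is a direct application of Lemma~\ref{lem:tree-spectrum}, in the same way Lemma~\ref{lem:pumping} was derived for first-order component types. The plan is to view each class $a\in\mathcal A_s$ as a class of finite trees and check that it is $\mathrm{MSO}_2$-definable. By Lemma~\ref{lem:mso-FV-forest}, the partition $\tau_s$ is definable: for each $a\in\mathcal A_s$ there is an $\mathrm{MSO}_2$ sentence $\chi_a$ such that a finite tree $T$ satisfies $\tau_s(T)=a$ if and only if $T\models\chi_a$. Put
\[
\mathcal C_a=\{T:T \text{ a finite tree},\ T\models\chi_a\}.
\]
Then $\mathcal C_a$ is an $\mathrm{MSO}_2$-definable class of finite trees. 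The class of all trees is itself $\mathrm{MSO}_2$-definable (connected and acyclic), so one may equally use the sentence $\chi_a\wedge$``tree''; either way the hypothesis of Lemma~\ref{lem:tree-spectrum} is met.

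Next, let $\mathcal A_s^\infty\subseteq\mathcal A_s$ be the set of classes having representatives of arbitrarily large order. This is a finite family, since $\mathcal A_s$ is finite. Apply the second assertion of Lemma~\ref{lem:tree-spectrum} to the family $(\mathcal C_a)_{a\in\mathcal A_s^\infty}$. It yields a single constant $C\geq1$ such that, for every $a\in\mathcal A_s^\infty$ and every $m\geq1$, there is $T\in\mathcal C_a$ with $m\leq|V(T)|\leq Cm$. Set $C_s=C$; if $\mathcal A_s^\infty$ is empty, take $C_s=1$ and the statement is vacuous. Since $T\in\mathcal C_a$ means $\tau_s(T)=a$, this is exactly the claim. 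The constant depends only on $s$, because $\mathcal A_s$ and the sentences $\chi_a$ depend only on $s$.

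The only point needing care is that $\chi_a$ must genuinely be an $\mathrm{MSO}_2$ sentence in the graph-theoretic (incidence) sense, since that is the setting of Lemma~\ref{lem:tree-spectrum}. Lemma~\ref{lem:mso-FV-forest} was stated in the incidence presentation, so this matches. The order $|T|$ means the number of vertices, which is the first coordinate of the Parikh vector used in Lemma~\ref{lem:tree-spectrum}. I expect no real obstacle beyond these bookkeeping checks; all the substance (the automaton and Parikh argument) is already in Lemma~\ref{lem:tree-spectrum}.
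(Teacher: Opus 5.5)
Your proposal is correct and is essentially the paper's proof. You obtain $\mathrm{MSO}_2$-definability of each class $\{T:\tau_s(T)=a\}$ from the sentences $\chi_a$ of Lemma~\ref{lem:mso-FV-forest}, then apply the second assertion of Lemma~\ref{lem:tree-spectrum} to the finitely many classes with representatives of arbitrarily large order; your extra bookkeeping (the empty case and the incidence presentation) is harmless.
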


\begin{proof}
By Lemma~\ref{lem:mso-FV-forest}, each class
\[
        \mathcal T_a=\{T:\tau_s(T)=a\}
\]
is $\mathrm{MSO}_2$-definable.  Apply the second assertion of
Lemma~\ref{lem:tree-spectrum} to the finite family of classes
$\mathcal T_a$ having representatives of arbitrarily large order.
\end{proof}

\begin{proposition}\label{prop:mso-forest-profile}
Fix $s$.  There are constants $B_s,M_s$ and a finite set
$\mathcal A_s^\infty\subseteq\mathcal A_s$ such that the following holds.  If
a finite forest $F$ has no component of order at most $B_s$ and has at least
$M_s$ components of each state in $\mathcal A_s^\infty$, then the
$\mathrm{MSO}_2$ theory of $F$ up to quantifier rank $s$ is fixed, independent
of $F$.
\end{proposition}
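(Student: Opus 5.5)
The proof copies that of Proposition~\ref{prop:forest-profile}, with Lemma~\ref{lem:mso-FV-forest} in place of Lemma~\ref{lem:forest-FV}. Fix $s$ and take $\mathcal A_s$, the partition $\tau_s$, and the cutoff $M_s$ from Lemma~\ref{lem:mso-FV-forest}. Call a state $a\in\mathcal A_s$ \emph{unbounded} if the class $\mathcal T_a=\{T:\tau_s(T)=a\}$ contains trees of arbitrarily large order, and \emph{bounded} otherwise. Let $\mathcal A_s^\infty$ be the set of unbounded states. For each bounded state $a$ with $\mathcal T_a\neq\emptyset$, let $B_a$ be the maximum order of a tree in $\mathcal T_a$. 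Since $\mathcal A_s$ is finite, we may put $B_s=\max_a B_a$, taking $B_s=0$ if there are no nonempty bounded states.

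Now let $F$ and $F'$ be finite forests satisfying the hypothesis. Every component of $F$ has order greater than $B_s$, so no component lies in a bounded state. Hence $n_a(F)=0$ for every bounded $a$, and likewise $n_a(F')=0$. These counts are equal and less than $M_s$; here we use $M_s\geq1$, which we may assume by enlarging the cutoff. For every $a\in\mathcal A_s^\infty$, the hypothesis gives $n_a(F),n_a(F')\geq M_s$. Thus the counting condition of Lemma~\ref{lem:mso-FV-forest} holds for every $a\in\mathcal A_s$, and that lemma gives that $F$ and $F'$ satisfy the same $\mathrm{MSO}_2$ sentences of quantifier rank at most $s$. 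So the rank-$s$ theory is the same for all such forests.

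There is no real obstacle, only two small checks. First, enlarging $M_s$ must preserve the conclusion of Lemma~\ref{lem:mso-FV-forest}. This holds because a larger cutoff gives a stronger hypothesis. Second, the partition $\tau_s$ is defined on all finite trees, so every component of $F$ receives some state and the case analysis over bounded and unbounded states covers every component. If some state is empty, its count is $0$ in both forests and it causes no difficulty. The substantive content, namely the composition theorem and the cutoff for unary index structures, is already in Lemma~\ref{lem:mso-FV-forest}. Lemma~\ref{lem:mso-tree-pumping} is not needed here; it is used later to show that $H_n$ actually realizes each unbounded state $M_s$ times.
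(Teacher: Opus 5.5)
Your proposal is correct and follows essentially the same route as the paper: take $\mathcal A_s^\infty$ to be the states with representatives of arbitrarily large order, let $B_s$ be a common order bound for the remaining states, and apply Lemma~\ref{lem:mso-FV-forest}. Your extra check, that bounded states have count $0<M_s$ in both forests and that enlarging the cutoff only strengthens the lemma's hypothesis, is a harmless detail that the paper leaves implicit.
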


\begin{proof}
Let $\mathcal A_s^\infty$ be the set of states $a\in\mathcal A_s$ which have
finite-tree representatives of arbitrarily large order.  Since $\mathcal A_s$
is finite, all remaining states have representatives of order bounded by a
common constant; call this bound $B_s$, and take $B_s=0$ if there are no
bounded states.  If $F$ has no component of size at
most $B_s$, then no bounded state occurs in $F$.  If, in addition, every state
in $\mathcal A_s^\infty$ occurs at least $M_s$ times, then
Lemma~\ref{lem:mso-FV-forest} shows that the $\mathrm{MSO}_2$ theory of $F$
up to rank $s$ is determined.
\end{proof}

\begin{proof}[Proof of Theorem~\ref{thm:ER-mso}]
Fix an $\mathrm{MSO}_2$ sentence $\varphi$ and let $s$ be its quantifier rank.
Take the constants and states from Proposition~\ref{prop:mso-forest-profile}.
For every $a\in\mathcal A_s^\infty$, Lemma~\ref{lem:mso-tree-pumping} gives a
finite tree $F_{a,n}$ such that
\[
        \tau_s(F_{a,n})=a
        \qquad\text{and}\qquad
        f(n)\leq |F_{a,n}|\leq C_s f(n).
\]
Since $|F_{a,n}|\leq C_s f(n)$, the hypothesis on $f$ gives
\[
\begin{aligned}
 |F_{a,n}|\bigl(\log |F_{a,n}|+\log(1/c_n)\bigr)
 &\leq C_s f(n)\bigl(\log(C_s f(n))+\log(1/c_n)\bigr)\\
 &=o(\log n).
\end{aligned}
\]
By Proposition~\ref{prop:ER-components}, $G(n,p_n)$ has at least $M_s$
connected components isomorphic to $F_{a,n}$ with probability tending to one.
There are only finitely many states $a\in\mathcal A_s^\infty$, so this holds
simultaneously for all of them with probability tending to one.  Since
$|F_{a,n}|\geq f(n)$, all of these components survive in $H_n$.

Every component of $H_n$ has order at least $f(n)>B_s$ for all
sufficiently large $n$.  By Lemma~\ref{lem:ER-forest}, $H_n$ is also a forest
with probability tending to one.  Hence the hypotheses of
Proposition~\ref{prop:mso-forest-profile} hold with probability tending to
one, and the truth value of $\varphi$ is forced on that event.  Thus $(H_n)$
has an $\mathrm{MSO}_2$ zero--one law.
\end{proof}

The preceding theorem is special to the pruned Erd\H{o}s--R\'enyi forests.
There is no corresponding monadic extension of the torus results in the same
generality.  The obstruction is already present in the weaker
$\mathrm{MSO}_1$ fragment: monadic second-order logic can express
bipartiteness, and the parity of the side length $L$ is visible through this
property.

\begin{proposition}\label{prop:mso-torus-obstruction}
Fix $d\geq1$, let $N=L^d$, and let $X_L=X_L^d(q_N)$ with
$q_N=1-p_N$.  Suppose
\[
        L p_N\longrightarrow0.
\]
If $L$ ranges over all positive integers, then $(X_L)$ does not have an
$\mathrm{MSO}_1$ convergence law, and hence does not have an
$\mathrm{MSO}_2$ convergence law.
\end{proposition}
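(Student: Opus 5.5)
The plan is to use the $\mathrm{MSO}_1$ sentence $\beta$ expressing bipartiteness,
\[
 \exists U\,\forall x\,\forall y\bigl(E(x,y)\to(U(x)\leftrightarrow\neg U(y))\bigr),
\]
and show that $\Prob(X_L\models\beta)$ tends to $1$ along even $L$ and to $0$ along odd $L$. Since $L$ ranges over all positive integers, these two subsequences give different limits, so the convergence law fails. Every $\mathrm{MSO}_1$ sentence translates into an $\mathrm{MSO}_2$ sentence over the incidence structure, so the $\mathrm{MSO}_2$ claim follows.

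For even $L$ the argument is deterministic. The parity colouring $x\mapsto x_1+\cdots+x_d \bmod 2$ is well defined on $(\Z/L\Z)^d$ when $L$ is even. Each torus edge changes exactly one coordinate by $\pm1$, so the colouring is proper and $T_L^d$ is bipartite. Any subgraph, in particular $X_L$, is then bipartite, so $\Prob(X_L\models\beta)=1$ for every even $L$. This step does not use the hypothesis on $p_N$.

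For odd $L\geq3$, fix the coordinate cycle $\{(t,0,\ldots,0):t\in\Z/L\Z\}$. It consists of $L$ torus edges and is an odd cycle. If all $L$ of these edges are retained, $X_L$ contains an odd cycle and is not bipartite. By a union bound, the probability that some edge of this cycle is omitted is at most $Lp_N$, and this tends to $0$ by hypothesis. Hence $\Prob(X_L\models\beta)\geq 1-Lp_N\to1$ along odd $L$. Here $\beta$ must be taken as bipartite, and the odd case forces non-bipartiteness. So $\Prob(X_L\models\beta)\to0$ along odd $L$ and equals $1$ along even $L$, and the two limits differ.

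The only point needing care is the small case $L=1$, and $L=2$ where the torus has multi-edges or loops. The set of such $L$ is finite and does not affect limits. One should also observe that both parity classes are infinite, which holds because $L$ ranges over all positive integers. I expect no step to be a genuine obstacle. The main thing to check is that the hypothesis $Lp_N\to0$ is exactly what the union bound over a single coordinate cycle of length $L$ requires.
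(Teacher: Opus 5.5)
Your proposal is correct and is the paper's proof: the bipartiteness sentence $\beta$, the coordinate-parity colouring for even $L$, and a fully retained odd coordinate cycle, present with probability at least $1-Lp_N$, for odd $L$. One slip to fix: the inequality in your odd case should read $\Prob(X_L\models\beta)\le Lp_N$ (equivalently, $X_L$ is non-bipartite with probability at least $1-Lp_N$), which is what your next sentence actually uses.
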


\begin{proof}
In the graph vocabulary, the sentence
\[
 \beta :=
 \exists U\,\forall x\,\forall y\,
 \Bigl(E(x,y)\rightarrow
 \bigl((U(x)\wedge\neg U(y))\vee(\neg U(x)\wedge U(y))\bigr)\Bigr)
\]
says that the graph is bipartite.  This is an $\mathrm{MSO}_1$ sentence, and
therefore also an $\mathrm{MSO}_2$ sentence.  In the two-sorted incidence
presentation, $x,y$ are vertex-sort variables, $U$ is a vertex-set variable,
and $E(x,y)$ is replaced by
\[
 x\neq y\ \wedge\
 \exists e\,\bigl(\operatorname{Inc}(e,x)\wedge
                         \operatorname{Inc}(e,y)\bigr),
\]
with $e$ of the edge sort.  Thus the displayed property has exactly the same
meaning in the standard incidence presentation and uses no edge-set
quantification.

If $L$ is even, then $T_L^d$ is bipartite, using the parity of the sum of the
coordinates.  Hence every subgraph of $T_L^d$ is bipartite, and therefore
\[
        \Prob(X_L\models\beta)=1
\]
for every even $L$.

If $L$ is odd, fix the last $d-1$ coordinates and vary the first coordinate.
This gives a coordinate cycle of length $L$ in $T_L^d$.  It is an odd cycle.
The probability that all its edges are retained is
\[
        q_N^L=(1-p_N)^L\longrightarrow1,
\]
because
\[
        0\leq 1-q_N^L\leq L p_N\longrightarrow0.
\]
Thus, along odd $L$, the graph $X_L$ contains an odd cycle with probability
tending to one.  Hence
\[
        \Prob(X_L\models\beta)\longrightarrow0
\]
along odd $L$.

The same $\mathrm{MSO}_1$ sentence has subsequential limiting probabilities
$1$ and $0$.  Therefore $(X_L)$ has no $\mathrm{MSO}_1$ convergence law, and
a fortiori no $\mathrm{MSO}_2$ convergence law.
\end{proof}

\begin{corollary}\label{cor:mso-pruned-torus-obstruction}
Under the assumptions of Proposition~\ref{prop:mso-torus-obstruction}, suppose
also that $f:\N\to\N$ satisfies $f(N)=o(L)$, and put
\[
        Y_L=\Prune_{f(N)}(X_L).
\]
If $L$ ranges over all positive integers, then $(Y_L)$ does not have an
$\mathrm{MSO}_1$ convergence law, and hence does not have an
$\mathrm{MSO}_2$ convergence law.
\end{corollary}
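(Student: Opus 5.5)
The plan is to reuse the bipartiteness sentence $\beta$ from the proof of Proposition~\ref{prop:mso-torus-obstruction} and compare even and odd side lengths. For even $L$, $Y_L$ is an induced subgraph of $X_L$, which is a subgraph of the bipartite graph $T_L^d$. So $Y_L$ is bipartite deterministically, and $\Prob(Y_L\models\beta)=1$. The empty graph, which arises if everything is pruned, is also bipartite, so no case distinction is needed here.

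For odd $L$, fix a coordinate cycle $Z$ of length $L$, as in that proposition. With probability $q_N^L\ge 1-Lp_N\to1$, every edge of $Z$ is retained in $X_L$. On this event the component of $X_L$ containing $Z$ has at least $L$ vertices. Since $f(N)=o(L)$, we have $f(N)<L$ for all sufficiently large $L$, so this component survives pruning. Pruning removes whole components, so it keeps the induced structure on surviving components. Hence $Z$ is an odd cycle in $Y_L$, and $Y_L\not\models\beta$. Thus $\Prob(Y_L\models\beta)\to0$ along odd $L$.

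Along even $L$ the probabilities of $\beta$ are $1$, and along odd $L$ they tend to $0$, so there is no $\mathrm{MSO}_1$ convergence law. The incidence-presentation remark in the proposition transfers this failure to $\mathrm{MSO}_2$.

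The only point needing care is that pruning does not destroy the odd cycle. This is exactly where $f(N)=o(L)$ is used: it makes the cycle's component automatically large enough to survive. Note that $L\ge3$ is needed for $Z$ to be a genuine cycle, and that holds eventually.
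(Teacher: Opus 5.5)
Your proposal is correct and matches the paper's proof essentially step by step. For even $L$, bipartiteness is inherited deterministically from $T_L^d$. For odd $L$, with probability $q_N^L\to1$ there is a fully retained coordinate cycle, and its component has order at least $L>f(N)$, so it survives pruning and gives an odd cycle in $Y_L$.
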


\begin{proof}
For even $L$, the graph $T_L^d$ is bipartite, and therefore every induced
subgraph of every subgraph of $T_L^d$ is bipartite.  Hence
\[
        \Prob(Y_L\models\beta)=1
\]
for every even $L$, where $\beta$ is the sentence from the proof of
Proposition~\ref{prop:mso-torus-obstruction}.

For odd $L$, the proof of Proposition~\ref{prop:mso-torus-obstruction} shows
that, with probability tending to one, $X_L$ contains a fully retained
coordinate cycle of length $L$.  Since $f(N)=o(L)$, the component containing
this cycle has order at least $L>f(N)$ for all sufficiently large $L$.  Hence
that component survives in $Y_L$, and $Y_L$ contains an odd cycle with
probability tending to one.  Therefore
\[
        \Prob(Y_L\models\beta)\longrightarrow0
\]
along odd $L$.

Thus the probabilities of the same $\mathrm{MSO}_1$ sentence have different
subsequential limits, so $(Y_L)$ has neither an $\mathrm{MSO}_1$ nor an
$\mathrm{MSO}_2$ convergence law.
\end{proof}

\begin{remark}\label{rem:mso-torus}
For example, Proposition~\ref{prop:mso-torus-obstruction} applies when
$p_N=N^{-\alpha}$ with $\alpha>1/d$.  The obstruction is global rather than
local: no single first-order sentence uniformly detects the parity of these
unbounded coordinate cycles, whereas monadic second-order logic can express
bipartiteness.  This does not rule out monadic
limit laws for special subsequences, such as even side
lengths, or under additional global hypotheses.  It only shows that the torus
theorems proved above do not have a direct monadic analogue, even in
$\mathrm{MSO}_1$, when $L$ ranges over all integers.
\end{remark}

\begin{remark}
The arguments follow a common pattern.  First, logical questions are reduced
to finite local or component data: Hanf locality does this on tori, while the
pruned Erd\H{o}s--R\'enyi graph is handled by finite component types.  The
relevant witnesses are then counted.  Away from a threshold, each type is
absent or saturated; at a threshold, the underlying defect clusters have
Poisson limits.

For a fixed first-order torus sentence, only finitely many quantities
$N p_N^b$ (or $N q_N^b$) arise from the Hanf radius.  The polynomial
classification in Theorem~\ref{thm:polynomial} packages this finite collection
of thresholds into the usual exponent notation.  The monadic obstruction is
global and therefore does not rule out monadic convergence on special
subsequences, such as even side lengths, or in modified models that suppress
the relevant parity phenomenon.
\end{remark}

\end{document}